\newtheorem{thm}{Theorem}[section]
\newtheorem{prop}[thm]{Proposition}
\newtheorem{lem}[thm]{Lemma}
\newtheorem{cor}[thm]{Corollary}
\newtheorem{claim}{\textbf{Claim}}
\theoremstyle{definition}
\newcommand{\ep}{\epsilon}
\newcommand{\mbb}{\mathbb}
\newcommand{\de}{\delta}
\newcommand{\ov}{\overline}
\newcommand{\La}{\Lambda}
\newcommand{\pa}{\partial}
\newcommand{\mf}{\mathbb}
\newcommand{\Om}{\Omega}
\newcommand{\al}{\alpha}
\newcommand{\be}{\beta}
\newcommand{\ga}{\gamma}
\newcommand{\z}{\zeta}
\newcommand{\la}{\lambda}
\newcommand{\ti}{\tilde}
\newcommand{\De}{\Delta}
\newcommand{\Ga}{\Gamma}
\renewcommand{\Re}{\operatorname{Re}}
\renewcommand{\Im}{\operatorname{Im}}
\newcommand{\supp}{\operatorname{supp}}
\newcommand{\Ric}{\operatorname{Ric}}
\newcommand{\diag}{\operatorname{diag}}
\newcommand{\adj}{\operatorname{adj}}
\newcommand{\tr}{\operatorname{trace}}
\numberwithin{equation}{section}
\dedicatory{In memory of M. S. Narasimhan and R. R. Simha}
\keywords{Narasimhan-Simha type metrics, weighted Bergman kernel}
\begin{document}

\baselineskip=17pt

\title[Narasimhan-Simha type metrics]{Narasimhan--Simha type metrics on strongly pseudoconvex domains in $\mathbb{C}^n$}

\author[D. Borah]{Diganta Borah}
\address{Indian Institute of Science Education and Research, Pune, India}
\email{dborah@iiserpune.ac.in}

\author[K. Verma]{Kaushal Verma}
\address{Department of Mathematics, Indian Institute of Science, Bangalore, India}
\email{kverma@iisc.ac.in}

\date{}

\begin{abstract}
For a bounded domain $D \subset \mathbb{C}^n$, let $K_D = K_D(z) > 0$ denote the Bergman kernel on the diagonal and consider the reproducing kernel Hilbert space of holomorphic functions on $D$ that are square integrable with respect to the weight $K_D^{-d}$, where $d \geq 0$ is an integer. The corresponding weighted kernel $K_{D, d}$ transforms appropriately under biholomorphisms and hence produces an invariant K\"{a}hler metric on $D$. Thus, there is a hierarchy of such metrics starting with the classical Bergman metric that corresponds to the case $d=0$. This note is an attempt to study this class of metrics in much the same way as the Bergman metric has been with a view towards identifying properties that are common to this family. When $D$ is strongly pseudoconvex, the scaling principle is used to obtain the boundary asymptotics of these metrics and several invariants associated to them. It turns out that all these metrics are complete on strongly pseudoconvex domains.
\end{abstract}

\subjclass[2020]{32F45, 32A25, 32A36}

\keywords{Narasimhan-Simha type metrics, weighted Bergman kernel}

\maketitle

\section{Introduction}
For a bounded domain $D\subset\mf{C}^n$ and a non-negative measurable function $\phi$
on it, consider the space $L^2_{\phi}(D)$ of all measurable functions on $D$ that are square integrable with respect to the measure $\phi \,dV$ where $dV$ is the Lebesgue measure on $\mf{C}^n$. It is known that if $1/\phi \in L^{\infty}_{\text{loc}}(D)$ then the weighted Bergman space
\[
A^2_{\phi}(D)=\left\{\text{$f: D \to \mf{C}$ holomorphic and $\int_{D} \vert f\vert^2 \phi \, dV< \infty$} \right\}
\]
is a closed subspace of $L^2_{\phi}(D)$---see \cite{PWZ} and therefore there is a reproducing kernel denoted by $K_{D, \phi}(z,w)$, that makes $A^2_{\phi}(D)$ a reproducing kernel Hilbert space. The kernel $K_{D,\phi}(z,w)$ is uniquely determined by the following properties: $K_{D,\phi}(\cdot, w) \in A^2_{\phi}(D)$ for each $w \in D$, it is anti-symmetric, i.e., $K_{D,\phi}(z,w)=\ov{K_{D,\phi}(w,z)}$, and it reproduces $A^2_{\phi}(D)$:
\[
f(z)=\int_{D} K_{D,\phi}(z,w) f(w) \phi(w) \, dV(w), \quad f \in A^2_{\phi}(D).
\]
It also follows that for any complete orthonormal basis $\{\phi_k\}$ of $A^2_{\phi}(D)$,
\[
K_{D,\phi}(z,w)=\sum_{k} \phi_k(z) \ov{\phi_{k}(w)},
\]
where the series converges uniformly on compact subsets of $D \times D$.

\medskip

Let $K_D(z,w)$ be the usual Bergman kernel for $D$, and $K_D(z)=K_D(z,z)$ its restriction to the diagonal. Since $D$ is bounded, $K_D(z)>0$ everywhere and hence, for an integer $d\geq 0$, we may consider $\phi=K_D^{-d}$ as a weight to which all the considerations mentioned above can be clearly applied. In this case, the weighted Bergman space will be denoted by $A^2_d(D)$, i.e.,
\[
A^2_d(D)=\Big\{\text{$f: D \to \mf{C}$ holomorphic and $\int_{D} \vert f\vert^2 K_D^{-d} \, dV < \infty$}\Big\},
\]
and the corresponding weighted Bergman kernel by $K_{D,d}(z,w)$ to emphasize the dependence on $d\geq0$. The fact $K_D(z) \geq 1/\text{Vol}(D)$ everywhere for a bounded domain $D$ implies that $K_{D,d}(z)=K_{D,d}(z,z)>0$ and it can also be established that $\log K_{D,d}$ is strongly plurisubharmonic in the same way as for the usual Bergman kernel (see \cite{Chen}). The transformation rule for $K_{D,d}(z,w)$ under a biholomorphism $F: D \to D'$ is
\begin{equation}\label{tr-1}
K_{D,d}(z,w) =\big(\det F^{\prime}(z)\big)^{d+1} K_{D', d}\big(F(z),F(w)\big) \big(\ov{\det F^{\prime}(w)}\big)^{d+1}
\end{equation}
as will be seen later. Here, $F'(z)$ is the complex Jacobian matrix of $F$ at $z$. Therefore, $K_{D,d}(z)$ induces an invariant K\"ahler metric
\begin{equation}\label{ns-metric}
ds_{D,d}^2=\sum_{\al,\be=1}^n g_{\al\ov \be}^{D,d}(z) \,dz_{\al}\,d\ov z_{\be},
\end{equation}
where
\begin{equation}\label{ns-comp}
g^{D,d}_{\al\ov \be}(z)=\frac{\pa^2}{\pa z_{\al} \pa \ov z_{\be}} \log K_{D,d}(z).
\end{equation}
Several aspects of weighted Bergman spaces have been considered even with weight $K_D^{-d}$. However, the observation that the choice of $K_D^{-d}$, $d \geq 0$, as a weight gives rise to a kernel that transforms correctly under a change of coordinates, and hence induces an invariant metric, goes back to the work of Narashiman-Simha \cite{NS} on the moduli space of compact complex manifolds with ample canonical bundle. Adapting a special case of their construction on a bounded domain in $\mbb{C}^n$ leads to the consideration of $A^2_d(D)$ and hence $ds^2_{D,d}$ as in \eqref{ns-metric}. Henceforth, we will refer to $K_{D, d}$ as a weighted kernel of order $d$, and $ds^2_{D,d}$ as a Narasimhan-Simha type metric of order $d$. It must be mentioned that some general properties of this metric on pseudoconvex domains in complex manifolds were studied by Chen \cite{Chen} and more recent applications of this metric can be found in \cite{Ber-Pau}, \cite{Pau-Tak}, and \cite{To-Yeu} for example.

\medskip

The purpose of this note is to study some aspects of these metrics on $C^2$-smoothly bounded strongly pseudoconvex domains. In what follows, the integer $d\geq 0$ will be fixed once and for all. Let $\tau_{D,d}(z,v)$ be the length of a vector $v \in \mbb{C}^n$ at $z \in D$ in the metric $ds^2_{D,d}$. The Riemannian volume element of this metric will be denoted by
\[
g_{D,d}(z)=\det G_{D,d}(z) \quad \text{where} \quad G_{D,d}(z)= \begin{pmatrix}g^{D,d}_{\al \ov \be}(z)\end{pmatrix},
\]
and note that
\[
\beta_{D,d}(z)=g_{D,d}(z)\big(K_{D,d}(z)\big)^{-1/(d+1)}
\]
is a biholomorphic invariant. The holomorphic sectional curvature of $ds^2_{D,d}$ is given by
\[
R_{D,d}(z,v)=\frac{\sum_{\al,\be,\ga,\de} R^{D,d}_{\ov \al \be \ga \ov \de}(z)\ov v^{\al} v^{\be} v^{\ga} \ov v^{\de}}{\big(\sum_{\al,\be} g^{D,d}_{\al\ov \be}(z)v^\al \ov v^\be\big)^2},
\]
where
\[
R^{D,d}_{\ov \al \be \ga \ov \de}=-\frac{\pa^2 g^{D,d}_{\be \ov \al}}{\pa z_{\ga} \pa \ov z_{\de}}+\sum_{\mu,\nu} g_{D,d}^{\nu \ov \mu}\frac{\pa g^{D,d}_{\be \ov \mu}}{\pa z_{\ga}}\frac{\pa g^{D,d}_{\nu \ov \al}}{\pa \ov z_{\de}},
\]
$g_{D,d}^{\nu \ov \mu}(z)$ being the $(\nu,\mu)$th entry of the inverse of the matrix $G_{D,d}(z)$. The Ricci curvature of $ds^2_{D,d}$ is given by
\[
\text{Ric}_{D,d}(z,v)=\frac{\sum_{\al, \be} \Ric_{\al \ov \be}^{D,d}(z) v^{\al} \ov v^{\be}}{\sum_{\al,\be} g^{D,d}_{\al\ov \be}(z)v^\al \ov v^\be},
\]
where
\[
\Ric_{\al \ov \be}^{D,d}=  - \frac{\pa^2}{\pa z_{\al} \pa \ov z_{\be}}\log g_{D,d}.
\]
Finally, for $p \in D$, let $\de_D(p)$ be the Euclidean distance from $p$ to the boundary $\pa D$. For $p$ close to the boundary $\pa D$, let $\ti p \in \pa D$ be such that $\de_D(p) = \vert p - \ti p \vert$, and for a tangent vector $v \in \mf C^n$ based at $p$, write $v = v_H(p) + v_N(p)$ where this decomposition is taken along the tangential and normal directions respectively at $\ti p$. We will also use the standard notation
\[
z=({'z},z_n), \quad {'z}=(z_1, \ldots, z_{n-1}).
\]

\begin{thm}\label{spscvx}
Let $D \subset \mf{C}^n$ be a $C^2$-smoothly bounded strongly pseudoconvex domain and let $p^0 \in \pa D$. Denote by
\[
D_{\infty}=\big\{z \in \mf{C}^n: 2\Re z_n+\vert {'}z\vert^2<0 \big\},
\]
the unbounded realization of the unit ball in $\mf{C}^n$ and $b^{*}=('0,-1) \in D_{\infty}$. Then there are local holomorphic coordinates near $p^0$ in which we have
\begin{align*}
&\text{\em (a)}& &\de_D(p)^{(d+1)(n+1)} K_{D,d}(p) \to K_{D_{\infty},d}(b^{*})= c\left(\frac{n!}{2^{n+1}\pi^n}\right)^{d+1}, &\\
&\text{\em (b)}& &\de_D(p)^{n+1} g_{D,d}(p) \to g_{D_{\infty},d}(b^*)=  \frac{1}{2^{n+1}} (d+1)^n (n+1)^n,&\\
&\text{\em (c)}& &\be_{D,d}(p) \to \be_{D_{\infty},d}(b^*)=(d+1)^{n}(n+1)^{n} \left(\frac{1}{c}\right)^{1/(d+1)} \frac{\pi^n}{n!},&\\
&\text{\em (d)}& &\de_D(p) \tau_{D,d}(p,v) \to \tau_{D_{\infty},d}\big(b^{*}, ({'0},v_n)\big)= \frac{1}{2}\sqrt{(d+1)(n+1)}\big\vert v_N(p^0)\big\vert, &\\
&\text{\em (e)}& &\sqrt{\de_D(p)} \tau_{D,d}\big(p,v_H(p)\big) \to \tau_{D_\infty,d}\big(b^*,({'}v,0)\big)&\\
& & &\text{\hspace{4cm}}=\sqrt{\frac{1}{2}(d+1)(n+1)\mathcal{L}_{\pa D}\big(p^0, v_H(p^0)\big)}, &\\
&\text{\em (f)}& &R_{D,d}(p,v)\to -\frac{2}{(d+1)(n+1)}, \quad v \in \mf{C}^n \setminus\{0\}, &
\\
&\text{\em (g)}& &\Ric_{D,d}(p,v)\to -\frac{1}{d+1}, \quad v \in \mf{C}^n \setminus\{0\}. &
\end{align*}
Here
\[
c=\frac{\Ga\big((d+1)(n+1)\big)}{n!\Ga\big(d(n+1)+1\big)},
\]
and $\mathcal{L}_{\pa D}$ denotes the Levi form of $\pa D$ with respect to some defining function for $D$.
\end{thm}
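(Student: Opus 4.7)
The plan is to prove Theorem \ref{spscvx} by Pinchuk's scaling method, reducing every item to a computation at the point $b^*$ of the model domain $D_\infty$. First I would fix holomorphic coordinates near $p^0$ in which $p^0 = 0$ and the defining function of $D$ takes the form $\rho(z) = 2\Re z_n + |{'}z|^2 + O(|z|^3)$; such coordinates exist since $\partial D$ is strongly pseudoconvex. For each $p$ close to $p^0$, let $\ti p \in \partial D$ be its foot and let $L_p$ be the affine map sending $\ti p$ to the origin and the inward unit normal at $\ti p$ to $-e_n$. Then apply the non-isotropic dilation $\Delta_p({'}z, z_n) = (\de_D(p)^{-1/2}\,{'}z,\,\de_D(p)^{-1} z_n)$ and set $T_p = \Delta_p \circ L_p$. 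A direct computation yields $T_p(p) \to b^*$ and $|\det T_p'|^2 = \de_D(p)^{-(n+1)}$, while the scaled domains $D^p := T_p(D)$ converge to $D_\infty$ in the local Hausdorff sense.

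Applying the transformation rule \eqref{tr-1} to $T_p$ yields
\begin{equation*}
\de_D(p)^{(d+1)(n+1)} K_{D,d}(p) = K_{D^p,d}\big(T_p(p)\big),
\end{equation*}
so part (a) reduces to the stability statement that $K_{D^p,d} \to K_{D_\infty,d}$ locally uniformly on $D_\infty$. This stability is the principal technical step and, in my view, the main obstacle: its subtlety compared to the classical $d=0$ Bergman case is that the weight $K_D^{-d}$ itself depends on the domain. The plan is to combine a Ramadanov-type stability theorem for the ordinary Bergman kernels (giving $K_{D^p}^{-d} \to K_{D_\infty}^{-d}$ locally uniformly), a localization principle confining the relevant $L^2$ mass to a neighborhood of $\ti p$, and the extremal characterization of $K_{D^p,d}$ as an $L^2$-minimizer to pass to the weighted limit. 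Once this is in place, Cauchy's estimates upgrade the convergence to all mixed derivatives of the off-diagonal kernels $K_{D^p,d}(z,w)$; together with \eqref{ns-comp} and the analogous formulas for $g_{D,d}, R_{D,d}, \Ric_{D,d}$, and the scaling identity applied to the tensor $g^{D,d}_{\al\ov\be}$ rather than to the scalar $K_{D,d}$, this delivers parts (b), (d), (e), (f), (g). Part (c) is then immediate from (a) and (b). The anisotropic rates $\de_D(p)^{-1/2}$ and $\de_D(p)^{-1}$ in $\Delta_p$ account for the different powers of $\de_D(p)$ in the tangential (e) and normal (d) directions, and the Levi form appears in (e) as the surviving second-order jet of $\rho$ along tangential directions under scaling.

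It remains to evaluate the right-hand sides at $b^*$ on $D_\infty$. Using the Cayley transform $D_\infty \to \mathbb{B}^n$ that sends $b^*$ to $0$, together with the Jacobian factors from \eqref{tr-1}, the problem becomes a computation on the unit ball with weight $K_{\mathbb{B}^n}^{-d}(z) = (\pi^n/n!)^d (1-|z|^2)^{d(n+1)}$. An orthonormal basis of $A^2_d(\mathbb{B}^n)$ is furnished by suitably normalized monomials whose squared norms reduce to beta integrals in $|z|^2$; summing the reproducing series at the origin produces the constant $c = \Gamma((d+1)(n+1))/(n!\,\Gamma(d(n+1)+1))$ appearing in (a). Since the resulting $K_{\mathbb{B}^n,d}$ is a positive constant multiple of $K_{\mathbb{B}^n}^{d+1}$, the metric $ds^2_{\mathbb{B}^n,d}$ equals $(d+1)$ times the classical Bergman metric of the ball, which is K\"{a}hler--Einstein of constant holomorphic sectional curvature $-2/(n+1)$. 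From this identification the values of $g_{D_\infty,d}(b^*)$, of $\tau_{D_\infty,d}(b^*,\cdot)$ in the normal and tangential directions, and of the curvature constants $-2/((d+1)(n+1))$ and $-1/(d+1)$ in (f) and (g) all fall out by direct computation, matching the stated values.
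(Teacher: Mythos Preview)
Your outline is correct and matches the paper's approach: Pinchuk scaling plus the transformation rule reduce each item to the stability statement $K_{\ti D_j,d}\to K_{D_\infty,d}$ (with derivatives), which is then evaluated on $\mathbb{B}^n$ via the Cayley transform and the explicit formula $K_{\mathbb{B}^n,d}=c\,K_{\mathbb{B}^n}^{d+1}$. The one technical point you underplay is that the scaled domains and $D_\infty$ are unbounded, so the paper first invokes the peak-point localization to trap all scaled domains inside a fixed paraboloid $\Omega$, and then applies the Cayley transform to pass to bounded domains $\Phi(\ti D_j)\to\mathbb{B}^n$ before running the Ramadanov-type argument for the weighted kernels.
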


Several remarks are in order. First, the Cayley transform
\begin{equation}\label{cayley}
\Phi(z)= \left(\frac{\sqrt{2}\,{'z}}{z_n-1}, \frac{z_n+1}{z_n-1}\right)
\end{equation}
maps $D_{\infty}$ biholomorphically onto the unit ball $\mf B^n \subset \mf{C}^n$ with $\Phi(b^*)=0$. A computation, which is given later, shows that $K_{\mbb{B}^n,d}=cK_{\mbb{B}^n}^{d+1}$ with $c$ as defined in the theorem above, and this is what enables the explicit limiting expressions above to be computed. Thus Theorem~\ref{spscvx} (a) shows that
\[
K_{D,d}(z) \sim \big(K_{D}(z)\big)^{d+1}
\]
where $z$ is close to the boundary of a $C^2$-smooth strongly pseudoconvex domain. Thus for a fixed $d \geq 0$, the weighted Bergman kernel on the diagonal $K_{D,d}(z)$ behaves like the $(d+1)$-st power of the usual Bergman kernel $K_D(z)$ as $z$ varies near $\pa D$.

\medskip

The question of determining the behaviour of $K_{D,d}(z)$ at a given fixed $z \in D$ is known and can be read off from several existing theorems \cites{Eng97,Eng02, Chen06} of which Theorem~1.1 in Chen \cite{Chen06} is a convenient prototype. Indeed, and application of this theorem (set $\phi=K_{D}^{-1}$ and $\psi=1$ in the notation therein) shows that
\[
\lim_{d \to \infty} \left(K_{D,d}(z)\right)^{1/d} = K_{D}(z)
\]
for each fixed $z \in D$ and this translates to $K_{D,d}(z) \sim \big(K_{D}(z)\big)^d$ for $d$ large.

\medskip

To summarize, if $K_{D,d}(z)$ is regarded as a function of $z$ and $d$, its behaviour as a function of $z$ is like $\big(K_{D}(z)\big)^{d+1}$ and this is furnished by Theorem~\ref{spscvx}~(a), whereas its behaviour as a function of $d$ is like $\big(K_D(z)\big)^d$ and this is already known. A very recent result that is relevant to Theorem~\ref{spscvx}~(a) can be found in Chen \cite{Chen20}.

\medskip

Second, Theorem~\ref{spscvx}~(d), (e) are direct analogs of Graham's result \cite{Gr} for the Kobayashi and Carath\'{e}odory metrics. Further, Theorem~\ref{spscvx}~(f) shows that the holomorphic sectional curvature of $ds^2_{D,d}$ along a sequence $(p_j,v_j) \in D \times \mf{C}^n$ approaches $-2/((d+1)(n+1))$ as 
$(p_j,v_j) \to (p,v) \in \pa D \times \mf{C}^n$. A verbatim application of Theorem~1.17 in \cite{GK} shows that if $D,D' \subset \mbb{C}^n$ are $C^2$-smooth strongly pseudoconvex domains equipped with the metrics $ds^2_{D,d}$ and $ds^2_{D',d}$, then every isometry $f: (D, ds^2_{D,d}) \to (D', ds^2_{D',d})$ is either holomorphic or conjugate holomorphic.

\medskip

\begin{thm}
Let $D \subset \mbb C^n$ be a $C^2$-smoothly bounded strongly pseudoconvex domain. Then, $ds^2_{D, d}$ is complete for every $d \ge 0$.
\end{thm}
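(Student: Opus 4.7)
The aim is to establish that any piecewise $C^1$ curve $\gamma : [0,1) \to D$ with $\gamma(t) \to p^0 \in \pa D$ as $t \to 1^-$ has infinite $ds^2_{D,d}$-length. Given this, completeness follows in the usual way: if a $d_{D,d}$-Cauchy sequence in $D$ failed to converge in $D$, then by boundedness of $D$ it would accumulate at some $p^0 \in \pa D$, and gluing short connecting paths between consecutive tail elements would produce a curve of finite $ds^2_{D,d}$-length accumulating at $p^0$, contradicting the claim.

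The crucial input is Theorem~\ref{spscvx}(d), but upgraded from a pointwise limit to a uniform estimate of the form
\[
\tau_{D,d}(p,v) \ge \frac{c_0\, \vert v_N(p) \vert}{\de_D(p)}
\]
valid for all $p$ with $\de_D(p) < \ep_0$ and all $v \in \mf C^n$, where $c_0, \ep_0 > 0$ are fixed constants and $v_N(p)$ is the normal component of $v$ at the foot $\ti p \in \pa D$. I would obtain this by revisiting the scaling argument that yields Theorem~\ref{spscvx}: the rescaled metric coefficients converge locally uniformly on compact subsets of the model domain $D_\infty$, and compactness of $\pa D$ together with homogeneity in $v$ then promote this to uniform control across a full one-sided tubular collar of $\pa D$. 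Extracting this uniform bound from the pointwise statement is the main technical obstacle of the argument.

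With the uniform bound in hand, the length calculation is standard. Pick $t^* \in [0,1)$ so that $\de_D(\gamma(t)) < \ep_0$ for all $t \in [t^*, 1)$. Since $\pa D$ is $C^2$, on the collar the function $\de_D$ is $C^1$ with unit gradient in the inward normal direction at $\ti p$, so
\[
\Bigl\vert \tfrac{d}{dt} \de_D\bigl(\gamma(t)\bigr) \Bigr\vert \le \vert \gamma_N'(t) \vert.
\]
Combining with the uniform lower bound gives
\[
\len(\gamma) \ge c_0 \int_{t^*}^{1} \frac{\bigl\vert (d/dt)\de_D(\gamma(t)) \bigr\vert}{\de_D(\gamma(t))}\, dt \ge c_0 \Bigl\vert \log \de_D(\gamma(t^*)) - \lim_{t\to 1^-} \log \de_D(\gamma(t)) \Bigr\vert = +\infty,
\]
since $\de_D(\gamma(t)) \to 0$. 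The hard part is thus the uniform version of (d); everything else reduces to the same logarithmic divergence that underlies Kobayashi's classical Bergman completeness theorem. As an alternative route one could try to leverage part~(a), which makes $\log K_{D,d}$ a proper plurisubharmonic exhaustion of $D$, and adapt the weighted analog of Kobayashi's criterion; but the approach above is the most direct.
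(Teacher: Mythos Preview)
Your argument is essentially correct, and the uniformization step you flag as the main obstacle can indeed be carried out by compactness of $\pa D$ together with the locally uniform convergence under scaling; once the lower bound $\tau_{D,d}(p,v)\ge c_0\vert v_N(p)\vert/\de_D(p)$ is in hand, the logarithmic divergence goes through exactly as you wrote.

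However, the paper takes a much shorter and genuinely different route. Rather than appealing to the boundary asymptotics of Theorem~\ref{spscvx}, it observes that the minimum-integral description of $\tau_{D,d}$ from Section~\ref{localisation} (specifically $\tau_{D,d}^2(p,v)=I^0_{D,d}(p,v)/I^1_{D,d}(p,v)$) lets one copy verbatim the classical proof that the Bergman metric dominates the Carath\'eodory metric, yielding $\tau_{D,d}\ge c_D$ on any bounded $D$. Since the Carath\'eodory metric is known to be complete on $C^2$-smooth strongly pseudoconvex domains, completeness of $ds^2_{D,d}$ follows immediately. This avoids entirely the need to uniformize Theorem~\ref{spscvx}(d) and does not use the scaling machinery at all. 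Your approach has the advantage of being self-contained within the paper's toolkit and of giving a quantitative normal-direction estimate, but it costs real additional work; the paper's comparison argument is a one-line reduction to an existing theorem.
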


\section{Some Examples}

\noindent In this section, we prove the transformation rule \eqref{tr-1} and compute the weighted kernel of order $d \ge 0$ for the unit ball and polydisc in $\mathbb{C}^n$ by working with a complete orthonormal basis, and recall Selberg's observation about this kernel on homogeneous domains.

\begin{prop}\label{tthm-1}
Let $F: D \to D'$ be a biholomorphic map. Then the transformation rule \eqref{tr-1} holds.
\end{prop}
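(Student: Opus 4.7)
The plan is to exploit the unweighted Bergman transformation rule together with a change-of-variables argument to pull back functions via $F$. Recall that the usual Bergman kernel satisfies $K_D(z,w)=\det F'(z)\,K_{D'}(F(z),F(w))\,\overline{\det F'(w)}$, so on the diagonal
\[
K_D(z)^{-d}=\bigl|\det F'(z)\bigr|^{-2d}\,K_{D'}\bigl(F(z)\bigr)^{-d}.
\]
This identity will be the only nontrivial ingredient beyond a routine Jacobian computation.

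Next, I would introduce the pullback operator $T\colon A^2_d(D')\to A^2_d(D)$ defined by
\[
T(f)(z)=\bigl(\det F'(z)\bigr)^{d+1}\,f\bigl(F(z)\bigr),
\]
and verify that it is a surjective isometry. For $f\in A^2_d(D')$, substituting the displayed formula for $K_D^{-d}$ into $\int_D|T(f)|^2 K_D^{-d}\,dV$ produces a factor of $|\det F'(z)|^{2}$ after cancellation, and then the change of variable $w=F(z)$ (whose real Jacobian determinant is exactly $|\det F'(z)|^2$) converts the integral into $\int_{D'}|f(w)|^2 K_{D'}(w)^{-d}\,dV(w)$. Applying the same construction to $F^{-1}$ shows that $T$ is a Hilbert space isomorphism.

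Finally, I would transport an orthonormal basis: if $\{\phi_k\}$ is a complete orthonormal basis of $A^2_d(D')$, then $\{T(\phi_k)\}$ is one for $A^2_d(D)$. Summing the series representation of $K_{D,d}$ against this basis and factoring out the holomorphic/antiholomorphic Jacobian factors gives
\[
K_{D,d}(z,w)=\sum_k T(\phi_k)(z)\,\overline{T(\phi_k)(w)}=\bigl(\det F'(z)\bigr)^{d+1}\,K_{D',d}\bigl(F(z),F(w)\bigr)\,\bigl(\overline{\det F'(w)}\bigr)^{d+1},
\]
which is \eqref{tr-1}. There is no real obstacle here; the only point requiring mild care is keeping track of the exponent bookkeeping so that the $|\det F'|^{2(d+1)}$ factor from $|T(f)|^2$, the $|\det F'|^{-2d}$ factor from the weight transformation, and the $|\det F'|^2$ factor from the real Jacobian combine to leave exactly the norm on $D'$.
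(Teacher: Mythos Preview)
Your argument is correct. The exponent bookkeeping you describe is exactly right: the factor $\lvert\det F'\rvert^{2(d+1)}$ from $\lvert T(f)\rvert^2$, the factor $\lvert\det F'\rvert^{-2d}$ from the weight transformation, and the real Jacobian $\lvert\det F'\rvert^{2}$ combine to leave precisely the $A^2_d(D')$ norm, so $T$ is a surjective isometry and the orthonormal-basis expansion gives \eqref{tr-1}.

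The paper, however, takes a slightly different route. Rather than constructing the unitary $T$ and transporting a basis, it verifies directly that the right-hand side of \eqref{tr-1} has the reproducing property for $A^2_d(D)$: given $f\in A^2_d(D)$, it computes the integral of the candidate kernel against $f\,K_D^{-d}$, uses the same Jacobian/weight substitution to convert it into an integral over $D'$, and then invokes the reproducing property of $K_{D',d}$ to recover $f(z)$. Uniqueness of the reproducing kernel then yields the identity. The underlying computation is essentially the same as yours; the difference is packaging. Your approach makes the Hilbert-space isomorphism $A^2_d(D')\cong A^2_d(D)$ explicit, which is conceptually pleasant and reusable, while the paper's approach is marginally more economical in that it checks a single defining property and appeals to uniqueness without needing the series representation.
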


\begin{proof}
It is enough to show that the right hand side of \eqref{tr-1} reproduces $A^2_d(D)$, i.e., if $f \in A^2_d(D)$ then
\begin{equation}\label{rep-ns}
f(z)=\int_D \big(\det F^{\prime}(z)\big)^{d+1} K_{D',d}\big(F(z),F(w)\big) \ov{\big(\det F^{\prime}(w)\big)^{d+1}} f(w) K_D^{-d}(w) \, dV(w)
\end{equation}
for $z \in D$. By the transformation rule for the Bergman kernel, we have
\[
\ov{\big(\det F^{\prime}(w)\big)^{d+1}} K_D^{-d}(w)
=\frac{1}{(\det F'(w))^{d+1}} K_{D'}^{-d}\big(F(w)\big) \big\vert \det F'(w) \big\vert^{2}.
\]
Therefore, substituting $\ti w =F(w)$, and using $dV(\ti w) = \big\vert \det F'(w) \big\vert^{2} dV(w)$, the right hand side of \eqref{rep-ns} becomes
\[
\big(\det F^{\prime}(z)\big)^{d+1} \int_{D'} K_{D',d}\big(F(z), \ti w) \frac{f\circ F^{-1}(\ti w)}{\det  F^{\prime}\big(F^{-1}(\ti w)\big)^{d+1}} K_{D'}^{-d}(\ti w) \, dV(\ti w),
\]
and by the reproducing property of $K_{D',d}$, this is equal to
\[
\big(\det F^{\prime}(z)\big)^{d+1} \frac{f\circ F^{-1}\big(F(z)\big)}{\big(\det F^{\prime}(z)\big)^{d+1}}= f(z),
\]
which is the left hand side of \eqref{rep-ns}.
\end{proof}

The transformation rule implies that the Narasimhan--Simha metric is invariant under biholomorphisms. Indeed, if $F: D \to D'$ is a biholomorphism, then from \eqref{tr-1},
\[
\log K_{D,d}(z)= (d+1) \log \big\vert \det F^{\prime}(z)\big\vert^2 + \log K_{D',d}\big(F(z)\big),
\]
and since the first term on the right is locally the sum of a holomorphic and antiholomorphic function, its mixed partial derivatives vanish, and hence
\[
g_{\al\ov \be}^{D,d}(z)=\sum_{l,m=1}^n g_{l \ov m}^{D',d}\big(F(z)\big) \frac{\pa F_l}{\pa z_{\al}}\ov {\left( \frac{\pa F_m}{\pa z_{\be}} \right)}.
\]
In terms of matrices, this takes the form
\begin{equation}\label{tr-2}
G_{D,d}(z)=F^{\prime}(z)^TG_{D',d}\big(F(z)\big) \ov{F^{\prime}(z)},
\end{equation}
and it follows that for $v \in \mf{C}^n$,
\begin{equation}\label{tr-3}
\tau_{D,d}(p, v)=\tau_{D', d}\big(F(p), F'(p)v\big),
\end{equation}
establishing the invariance of this metric. Here, $F^{\prime}(z)^T$ is the transpose of the matrix $F'(z)$. We also emphasise that throughout this article, vectors in $\mf{C}^n$ will be regarded as column vectors.

\medskip

To compute this weighted kernel on the unit ball $\mf{B}^n$ and the unit polydisc ${\De}^n$ in $\mf{C}^n$, we will need the following:
\begin{lem}\label{Dir-int}
Let $f$ be a continuous function on $[0,1]$. Then for $r \in \mf{R}^n$ with $r_i \geq 0$, and multiindex $\al$,
\begin{align*}
& \text{\em (a)} \quad \int_{r_1+\ldots +r_n<1} f(r_1+\cdots +r_n) r^{\al-1} \,dr= \frac{\prod\Ga(\al_i)}{\Ga(\vert \al\vert)} \int_0^1 f(\tau) \tau^{\vert \al\vert-1} \,d\tau.
\\
& \text{\em (b)} \quad \int_{r_1^2+\ldots+r_n^2<1} f(r_1^2+\cdots+r_n^2)r^{2\al-1} \,dr=\frac{1}{2^n} \frac{\prod \Ga(\al_i)}{\Ga(\vert \al\vert)} \int_0^1 f(\tau) \tau^{\vert \al\vert-1} \,d\tau.
\end{align*}
\end{lem}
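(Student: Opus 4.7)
The plan is to establish (a) by induction on $n$ and then deduce (b) from (a) by a one-dimensional substitution. The base case $n=1$ of (a) is trivially the identity, since $\Gamma(\alpha_1)/\Gamma(\alpha_1)=1$.

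For the inductive step, assuming (a) in dimension $n-1$, I would write the $n$-dimensional integral as an iterated integral: fix $r_n \in (0,1)$ and integrate first over the region $\{r_1+\cdots+r_{n-1}<1-r_n\}$. The substitution $r_i=(1-r_n)s_i$ for $i<n$ rescales this region to the standard $(n-1)$-simplex and produces a clean factor of $(1-r_n)^{|\alpha'|}$, where $\alpha'=(\alpha_1,\ldots,\alpha_{n-1})$, once the Jacobian is combined with the monomial $\prod r_i^{\alpha_i-1}$. The integrand's argument becomes $(1-r_n)(s_1+\cdots+s_{n-1})+r_n$, so the induction hypothesis applied to the continuous function $t\mapsto f((1-r_n)t+r_n)$ reduces the inner integral to a single integral in $\tau\in(0,1)$.

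What remains is a double integral over $(r_n,\tau)\in(0,1)^2$; here I would perform the linear change of variables $u=r_n$, $v=(1-r_n)\tau+r_n$ (so $\tau=(v-u)/(1-u)$, $d\tau = dv/(1-u)$), under which the factors of $1-r_n$ cancel exactly. Swapping the order of integration via Fubini leaves
\[
\int_0^1 f(v)\int_0^v u^{\alpha_n-1}(v-u)^{|\alpha'|-1}\,du\,dv,
\]
and the inner integral is a Beta integral equal to $v^{|\alpha|-1}\Gamma(\alpha_n)\Gamma(|\alpha'|)/\Gamma(|\alpha|)$. Multiplying by the prefactor $\prod_{i<n}\Gamma(\alpha_i)/\Gamma(|\alpha'|)$ telescopes to $\prod_i\Gamma(\alpha_i)/\Gamma(|\alpha|)$, completing (a).

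Part (b) is then immediate from the coordinate change $t_i=r_i^2$ on $\{r_i>0\}$, which sends $\{\sum r_i^2<1\}$ to $\{\sum t_i<1\}$ and transforms $r_i^{2\alpha_i-1}\,dr_i$ into $\tfrac{1}{2}t_i^{\alpha_i-1}\,dt_i$, producing the overall factor $2^{-n}$; applying (a) then yields the claimed formula. The only place requiring care is the cancellation of the powers of $1-r_n$ during the Fubini step in the induction; once that is carried out correctly, everything else is bookkeeping with Gamma functions.
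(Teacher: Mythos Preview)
Your argument is correct. For (b) you do exactly what the paper does: the substitution $t_i=r_i^2$ reduces (b) to (a) with the factor $2^{-n}$.

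For (a), the paper does not give a proof at all; it simply cites Whittaker--Watson \S12.5 (the classical Dirichlet integral). Your self-contained inductive argument is a standard way to establish that formula and all the steps check out: after the rescaling $r_i=(1-r_n)s_i$ the powers of $1-r_n$ indeed combine to $(1-r_n)^{|\alpha'|}$, the induction hypothesis applies to the continuous function $t\mapsto f((1-r_n)t+r_n)$, the change $(r_n,\tau)\mapsto(u,v)$ cancels the remaining powers of $1-u$ exactly as you say, and the inner integral $\int_0^v u^{\alpha_n-1}(v-u)^{|\alpha'|-1}\,du$ is the Beta integral $v^{|\alpha|-1}\Gamma(\alpha_n)\Gamma(|\alpha'|)/\Gamma(|\alpha|)$. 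The use of Fubini is justified because $f$, being continuous on $[0,1]$, is bounded, and the monomial weights are integrable provided each $\alpha_i>0$ (which is implicit in the statement and in the paper's subsequent application). So compared to the paper you are supplying a proof where the paper only points to a reference; there is no genuine methodological difference to discuss.
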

For (a) see Section 12.5 of \cite{WW} and (b) follows by substituting $r_i^2=t_i$ in (a).

\begin{prop}\label{NS-ball}
For the unit ball $\mf{B}^n \subset \mf{C}^n$ and $d \ge 0$
\[
K_{\mf{B}^n,d} = \frac{\Ga\big((d+1)(n+1)\big)}{n!\Ga(d(n+1)+1)} K_{\mf{B}^n}^{d+1}.
\]
\end{prop}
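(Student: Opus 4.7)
The plan is to compute an explicit complete orthonormal basis for $A^2_d(\mathbb{B}^n)$, sum the resulting series for the reproducing kernel, and compare with the known Bergman kernel of the ball. Since the weight
\[
K_{\mathbb{B}^n}^{-d}(z)=\left(\frac{\pi^n}{n!}\right)^{d}\bigl(1-\vert z\vert^2\bigr)^{d(n+1)}
\]
is radial, the monomials $\{z^{\al}\}_{\al\in\mbb{N}^n}$ are pairwise orthogonal in $A^2_d(\mbb{B}^n)$. They span a dense subspace since every holomorphic $L^2$-function on $\mbb{B}^n$ is given by a Taylor series converging in the weighted norm (standard argument for radial weights). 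Hence $\{z^{\al}/\|z^{\al}\|_{d}\}$ is a complete orthonormal basis, and
\[
K_{\mbb{B}^n,d}(z,w)=\sum_{\al}\frac{z^{\al}\,\ov{w^{\al}}}{\|z^{\al}\|_d^2}.
\]

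Next I would compute $\|z^\alpha\|_d^2$ by passing to polar coordinates $z_i=r_ie^{i\theta_i}$. The angular integration gives $(2\pi)^n$, and the radial integral
\[
\int_{r_1^2+\cdots+r_n^2<1}r^{2\al+\mathbf{1}}\bigl(1-r_1^2-\cdots-r_n^2\bigr)^{d(n+1)}\,dr
\]
is evaluated with Lemma~\ref{Dir-int}(b) (applied with multiindex $\al+\mathbf{1}$), reducing it to the one-dimensional Beta integral
\[
\int_0^1\tau^{\vert\al\vert+n-1}(1-\tau)^{d(n+1)}\,d\tau=\frac{\Ga(\vert\al\vert+n)\,\Ga(d(n+1)+1)}{\Ga(\vert\al\vert+(d+1)(n+1))}.
\]
Combining these and the overall constant $(\pi^n/n!)^d$, I obtain
\[
\|z^{\al}\|_d^{2}=\frac{\pi^{n(d+1)}}{(n!)^d}\cdot\frac{\al!\,\Ga(d(n+1)+1)}{\Ga(\vert\al\vert+(d+1)(n+1))}.
\]

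The remaining step is to recognise the resulting series in closed form. Setting $m=(d+1)(n+1)$ and using the standard multinomial identity
\[
(1-\langle z,w\rangle)^{-m}=\sum_{\al}\frac{\Ga(m+\vert\al\vert)}{\al!\,\Ga(m)}\,z^{\al}\,\ov{w^{\al}},
\]
which follows by expanding $(1-t)^{-m}=\sum_k\binom{m+k-1}{k}t^k$ in the variable $t=\langle z,w\rangle$ and applying the multinomial theorem to each $\langle z,w\rangle^k$, the series for $K_{\mbb{B}^n,d}(z,w)$ collapses to
\[
K_{\mbb{B}^n,d}(z,w)=\frac{(n!)^d}{\pi^{n(d+1)}}\cdot\frac{\Ga\bigl((d+1)(n+1)\bigr)}{\Ga(d(n+1)+1)}\cdot\frac{1}{(1-\langle z,w\rangle)^{(d+1)(n+1)}}.
\]
Comparing with $K_{\mbb{B}^n}(z,w)=\frac{n!}{\pi^n}(1-\langle z,w\rangle)^{-(n+1)}$ raised to the $(d+1)$-th power yields the claimed formula.

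I do not anticipate a real obstacle: everything reduces to a radial weighted norm calculation and the multinomial expansion of a power of $(1-\langle z,w\rangle)^{-1}$. The only small care needed is the bookkeeping of the constants $\pi^n$, $n!$ and the two Gamma factors so that after cancellation precisely $\Ga((d+1)(n+1))/(n!\,\Ga(d(n+1)+1))$ survives as the multiplier of $K_{\mbb{B}^n}^{d+1}$.
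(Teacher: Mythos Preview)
Your proposal is correct and follows essentially the same route as the paper: compute the weighted norms of the monomials via polar coordinates and Lemma~\ref{Dir-int}(b), then sum the resulting reproducing-kernel series using the expansion of $(1-\langle z,w\rangle)^{-m}$ and compare with $K_{\mathbb{B}^n}^{d+1}$. The only differences are cosmetic---you spell out the Beta integral and the multinomial step a bit more explicitly, while the paper quotes the series expansion directly---but the argument and the bookkeeping of constants are identical.
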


\begin{proof}
Recall that
\[
K_{\mf{B}^n}(z,w)=\frac{n!}{\pi^n}\frac{1}{\big(1-\langle z,w\rangle\big)^{n+1}}.
\]
A basis for the space $A^2_d(D)$ is $\{z^{\al}\}$ and by integrating in polar coordinates we see that it is orthogonal. Indeed, 
\begin{align*}
& \frac{\pi^{dn}}{(n!)^d}\int_{\mf{B}^n} z^{\al} \ov z^{\be} \big(1-\vert z \vert^2\big)^{d(n+1)} \, dV(z)\\
= &\int_{\vert r\vert^2<1}\int_{[0,2\pi]^n} r^{\al+\be+1} \big(1-\vert r\vert^2\big)^{d(n+1)}e^{i(\al-\be)\theta}\, dr  d\theta,
\end{align*}
and the integral with respect to $d\theta_i$ vanishes unless $\al_i=\be_i$. Also, if $\ga_{\al}=\|z^{\al}\|_{A^2_d(\mf{B}^n)}^2$, we obtain in view of Lemma \ref{Dir-int} that
\begin{align*}
\ga_{\al} & = \frac{\pi^{dn}}{(n!)^d} \, (2\pi)^n \int_{\vert r \vert^2<1} r^{2\al+1} \big(1-\vert r\vert^2\big)^{d(n+1)}dr\\
&  =  \frac{\pi^{(d+1)n}}{(n!)^d} \frac{\Ga\big(d(n+1)+1\big) \prod \Ga(\al_i+1)}{\Ga\big((d+1)(n+1)+\vert \al \vert\big)}.
\end{align*}
Using the series expansion
\[
\frac{1}{\big(1-\langle z, w\rangle\big)^s} = \sum_{\al} \frac{\Ga\big(s+\vert \al\vert\big)}{\Ga(s) \prod \Ga(\al_i+1)} (z\ov w)^{\al},
\]
we now obtain
\begin{align*}
K_{\mf{B}^n,d}(z,w) & = \sum_{\al} \frac{1}{\ga_{\al}} z^{\al} \ov w^{\al} = \frac{(n!)^d}{\pi^{(d+1)n}} \sum_{\al} \frac{\Ga\big((d+1)(n+1)+\vert \al \vert\big)}{\Ga\big(d(n+1)+1\big)\prod \Ga(\al_i+1)} (z \ov w)^{\al}\\
& =  \frac{(n!)^d}{\pi^{(d+1)n}} \frac{\Ga\big((d+1)(n+1)\big)}{\Ga\big(d(n+1)+1\big)} \frac{1}{\big(1-\langle z, w\rangle\big)^{(d+1)(n+1)}}\\ & = \frac{\Ga\big((d+1)(n+1)\big)}{n!\Ga\big(d(n+1)+1\big)} K_{\mf{B}^n}^{d+1}(z, w)
\end{align*}
as required.
\end{proof}

\begin{cor}\label{NS-ball-comp-z}
For the unit ball $\mf{B}^n \subset \mf{C}^n$, we have
\begin{align*}
& K_{\mf{B}^n,d}(z) = c\left(\frac{n!}{\pi^n}\right)^{d+1}\frac{1}{(1-\vert z \vert^2)^{(d+1)(n+1)}},\\
& g_{\al \ov \be}^{\mf{B}^n,d}(z)=(d+1)(n+1)\left(\frac{\de_{\al \be}}{1-\vert z \vert^2}+\frac{\ov z_{\al} z_{\be}}{(1-\vert z \vert^2)^2}\right),\\
& g_{\mf{B}^n,d}(z) =\frac{(d+1)^n(n+1)^n}{(1-\vert z\vert^2)^{n+1}},\\
& \be_{\mf{B}^n,d}(z) =(d+1)^{n}(n+1)^{n} \left(\frac{1}{c}\right)^{1/(d+1)} \frac{\pi^n}{n!},\\
& R_{\mf{B}^n,d}(z,v) =-\frac{2}{(d+1)(n+1)},\\
& \Ric_{\mf{B}^n,d}(z,v) =-\frac{1}{d+1},
\end{align*}
for $z \in \mf{B}^n$ and $v \in \mf{C}^n \setminus\{0\}$, and where $c$ is the constant in the statement of Theorem~\ref{spscvx}.
\end{cor}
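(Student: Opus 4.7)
The proof is a direct computation starting from Proposition~\ref{NS-ball}. Since $K_{\mbb{B}^n}(z) = (n!/\pi^n)(1-\vert z\vert^2)^{-(n+1)}$ on the diagonal, Proposition~\ref{NS-ball} immediately yields the first formula
\[
K_{\mbb{B}^n,d}(z) = c\left(\frac{n!}{\pi^n}\right)^{d+1}\frac{1}{(1-\vert z\vert^2)^{(d+1)(n+1)}}.
\]
Taking logarithm kills the multiplicative constants, so $\log K_{\mbb{B}^n,d}(z)$ equals a constant minus $(d+1)(n+1)\log(1-\vert z \vert^2)$. A direct calculation of $\pa^2/\pa z_\al \pa \ov z_\be$ applied to $-\log(1-\vert z\vert^2)$ then gives the claimed expression for $g^{\mbb{B}^n,d}_{\al\ov\be}$.

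Next, I would factor
\[
G_{\mbb{B}^n,d}(z) = \frac{(d+1)(n+1)}{1-\vert z\vert^2}\left( I + \frac{1}{1-\vert z\vert^2}\, \overline{z}\, z^T\right),
\]
viewing $z$ as a column vector. The matrix determinant lemma gives $\det(I + uv^T) = 1 + v^T u$ with $u = \overline{z}/(1-\vert z\vert^2)$ and $v = z$, so the parenthetical determinant equals $1/(1-\vert z\vert^2)$. This produces $g_{\mbb{B}^n,d}(z) = (d+1)^n(n+1)^n/(1-\vert z\vert^2)^{n+1}$, and substituting this together with the formula for $K_{\mbb{B}^n,d}(z)^{1/(d+1)}$ into the definition of $\be_{\mbb{B}^n,d}$ makes the $z$-dependence cancel, leaving the stated constant.

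For the two curvature identities, the key observation is that $G_{\mbb{B}^n,d}(z) = (d+1)\,G_{\mbb{B}^n,0}(z)$, i.e.\ the Narasimhan--Simha metric on the ball is a constant conformal rescaling by $(d+1)$ of the classical Bergman metric. Under $g \mapsto \la g$, the Christoffel-type quantities $g^{\nu\ov\mu}\pa g_{\be\ov\mu}/\pa z_\ga$ are scale invariant, so the full curvature tensor satisfies $R^{D,d}_{\ov\al\be\ga\ov\de}\mapsto \la R^{D,d}_{\ov\al\be\ga\ov\de}$; meanwhile $(g_{\al\ov\be}v^\al\ov v^\be)^2$ scales by $\la^2$, so the holomorphic sectional curvature scales by $\la^{-1}$. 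Similarly, $\det G$ scales by $\la^n$, its logarithm only by an additive constant, so $\Ric_{\al\ov\be}$ is invariant, while dividing by $g_{\al\ov\be}v^\al\ov v^\be$ scales $\Ric(v)$ by $\la^{-1}$. Since the Bergman metric on $\mbb{B}^n$ has constant holomorphic sectional curvature $-2/(n+1)$ and Einstein constant $-1$, applying these scaling rules with $\la = d+1$ gives $R_{\mbb{B}^n,d} = -2/((d+1)(n+1))$ and $\Ric_{\mbb{B}^n,d} = -1/(d+1)$.

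No step here is a real obstacle; the only mildly delicate point is the bookkeeping of how $R^{D,d}_{\ov\al\be\ga\ov\de}$ and $\Ric_{\al\ov\be}^{D,d}$ transform under the constant rescaling, which is settled by the short inspection above. Alternatively, one could compute both curvatures directly from the explicit formulas for $g^{\mbb{B}^n,d}_{\al\ov\be}$ and $g_{\mbb{B}^n,d}$, but routing through the rescaling of the Bergman metric is shorter and highlights the role of the factor $d+1$.
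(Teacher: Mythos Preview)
Your proof is correct. For the first four formulas your argument is essentially the paper's: both start from Proposition~\ref{NS-ball}, differentiate $\log K_{\mf{B}^n,d}$, and compute $\det G_{\mf{B}^n,d}$ by recognizing a rank-one perturbation of the identity (the paper does this via the characteristic polynomial of $A_z=\ov z z^T$, you via the matrix determinant lemma; these are equivalent).

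Where you diverge is in the two curvature statements. The paper argues directly: for $R_{\mf{B}^n,d}$ it uses transitivity of $\mathrm{Aut}(\mf{B}^n)$ together with the unitary rotations to reduce to the single computation of $R^{\mf{B}^n,d}_{\ov n n n \ov n}(0)/g^{\mf{B}^n,d}_{n\ov n}(0)^2$, while for $\Ric_{\mf{B}^n,d}$ it differentiates $\log g_{\mf{B}^n,d}$ and observes that $\Ric^{\mf{B}^n,d}_{\al\ov\be}=-\tfrac{1}{d+1}g^{\mf{B}^n,d}_{\al\ov\be}$. You instead note $G_{\mf{B}^n,d}=(d+1)G_{\mf{B}^n,0}$ and read off the effect of the constant rescaling $g\mapsto\la g$ on $R(z,v)$ and $\Ric(z,v)$, then import the classical values $-2/(n+1)$ and $-1$ for the Bergman metric. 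Your scaling check is accurate, and the approach cleanly isolates the factor $d+1$; the paper's route is self-contained and does not appeal to the $d=0$ case as prior knowledge. Either is fine.
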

\begin{proof}
The expression for $K_{\mf{B}^n,d}(z)$ follows from the previous proposition. From this expression,
\begin{equation}\label{g_al-be-as-der}
g_{\al \ov \be}^{\mf{B}^n,d}(z)= (d+1)(n+1)\frac{\pa^2}{\pa z_{\al} \pa \ov z_{\be}} \log\frac{1}{1-\vert z\vert^2},
\end{equation}
and the required expression follows upon differentiation. Now observe that
\[
g_{D,d}(z)=\frac{(d+1)^n(n+1)^n}{(1-\vert z \vert^2)^{2n}} \det \Big((1-\vert z \vert^2)\mf{I} +A_z\Big),
\]
where $\mf{I}=\mf{I}_{n}$ is the identity matrix of size $n$ and $A_z$ is the matrix $A_z=(\ov z_{\al}z_{\be})=\ov z z^T$. Since the rank of $A_z$ is $\leq 1$, at least $n-1$ eigenvalues of $A_z$ are $0$, and hence the remaining eignevalue is $\tr A=\vert z \vert^2$. Therefore, its characteristic polynomial is $P_{A_z}(\la)=\det(\la \mf{I}-A_z)=\la^{n}-\vert z\vert^2 \la^{n-1}$. From this, we obtain
\[
\det\big((1-\vert z \vert^2)\mf{I}+A_z\big)=(-1)^n P_{A_z}(-(1-\vert z\vert^2))=(1-\vert z\vert^2)^{n-1},
\]
and hence the desired expression for $g_{\mf{B}^n,d}(z)$ follows. The expression for $\be_{\mf{B}^n,d}(z)$ is now immediate from its definition. Since the automorphism group of $\mf{B}^n$ acts transitively on $\mf{B}^n$ and contains the unitary rotations, the holomorphic sectional curvature of $ds^2_{\mf{B}^n,d}$ is constant and hence it is enough to compute
\[
R_{\mf{B}^n,d}\big(0,({'}0,1)\big)=\frac{1}{g^{\mf{B}^n,d}_{n \ov n}(0)^2}R^{\mf{B}^n,d}_{\ov n n n \ov n}(0).
\]
Observe that all the first order partial derivatives of $g^{\mf{B}^n,d}_{\al \ov \be}$ vanish at $0$ and so
\[
R_{\ov n n n \ov n}(0)=-\frac{\pa g^{\mf{B}^n,d}_{n\ov n}}{\pa z_n \pa \ov z_n}(0)= -2(d+1)(n+1).
\]
Also, $g_{n \ov n}(0)=(d+1)(n+1)$. Therefore,
\[
R_{\mf{B}^n,d}(z,v)=R_{\mf{B}^n,d}\big(0,({'}0,1)\big)=-\frac{2}{(d+1)(n+1)}.
\]
Finally, from the expression of $g_{\mf{B}^n,d}(z)$, we obtain
\begin{equation}\label{Ric_al-be-as-der}
\Ric^{\mf{B}^n,d}_{\al \ov \be}(z)=-(n+1)\frac{\pa^2}{\pa z_{\al}\pa \ov z_{\be}} \log\frac{1}{1-\vert z \vert^2}.
\end{equation}
From \eqref{g_al-be-as-der} and \eqref{Ric_al-be-as-der},
\[
\Ric^{\mf{B}^n,d}_{\al \ov \be}(z)=-\frac{1}{d+1}g^{\mf{B}^n,d}_{\al \ov \be}(z),
\]
which implies that $\Ric_{\mf{B}^n,d}(z,v)$ is the constant $-1/(d+1)$.
\end{proof}

For the polydisc $\De^n \subset \mbb C^n$, observe that if $D_1\subset \mf{C}^n$ and $D_2\subset \mf{C}^m$ are bounded domains, then for $(z_1,z_2), (w_1,w_2) \in D_1 \times D_2$
\begin{equation}\label{K-on-prod}
K_{D_1\times D_2,d}\big((z_1,z_2), (w_1,w_2)\big)=K_{D_1,d}(z_1,w_1) K_{D_2,d}(z_2,w_2)
\end{equation}
as the reproducing property shows. By combining this with Proposition~\ref{NS-ball}, we obtain
\begin{equation}\label{NS-polydisc}
K_{\De^n, d}=(2d+1)^n K_{\De^n}^{d+1}.
\end{equation}

\noindent In general, for a bounded domain $\Om \subset \mf C^n$ with a transitive holomorphic automorphism group, it is known that (see Selberg \cite{Sel} and note that our $K_{\Om,d}$ coincides with $K_{d+1}$ of this paper)
\[
K_{\Om, d}(z, w) = c(d+1) (K_{\Om}(z, w))^{d+1},
\]
where
\[
\frac{1}{c(d)} = \int_{\Om} \frac{\vert K_{\Om}(z, w) \vert^{2d}}{\left( K_{\Om}(z, z) K_{\Om}(w, w) \right)^d} \; K_{\Om}(z, z) \; dV.
\] 


\section{Localization}\label{localisation}

The localization of the usual Bergman kernel and metric are well known---see for example \cite{DFH84} for a qualitative estimate on bounded domains of holomorphy and \cite{KY} for a quantitative estimate near holomorphic peak points of such domains. We will now show that the analogs of these results hold for $K_{D,d}$ and $\tau_{D,d}$ as well. The first step in doing so is to understand their relation with certain minimum integrals.

\subsection{Minimum Integrals}
Let $D \subset \mf C^n$ be a bounded domain. For $p \in D$ and a nonzero vector $v \in \mf{C}^n$, consider the following minimum integrals:
\begin{equation}\label{min-int}
\begin{aligned}
I_{D,d}^0(p,v) & =\inf \left\{\|f\|_{A^2_d(D)}^2 : f \in A^2_d(D), f(p)=1 \right\},\\
I_{D,d}^1(p,v) & = \inf \left\{\|f\|_{A^2_d(D)}^2 : f \in A^2_d(D), f(p)=0, f^{\prime}(p)v=1 \right\},\\
I_{D,d}^2(p,v) & = \inf \left\{\|f\|_{A^2_d(D)}^2 : f \in A^2_d(D), f(p)=0, f^{\prime}(p)v=0,\right.\\
&\text{\hspace{7cm}}\left. v^{T}f''(p)v=1\right\},\\
\la_{D,d}^k(p,v) & =\inf\left\{\|f\|_{A^2_d(D)}^2 : f \in A^2_d(D), f(p)=0,\right.\\
& \text{\hspace{0cm}} \left. \frac{\pa f}{\pa z_j}(p)=0 \text{ for $1 \leq j <k$ }, \frac{\pa f}{\pa z_k}(p)=1 \right\}, 1 \leq k \leq n,\\
I_{D,d}(p,v) & = \inf \left\{\|f\|_{A^2_d(D)}^2 : f \in A^2_d(D), f(p)=0, f'(p)=0,\right.\\
&\text{\hspace{3.5cm}}\left. v^{t} f''(p) \ov G_{D,d}^{-1} (p) \ov{f''}(p)\ov v=1\right\}, \text{ and}\\
M_{D,d}(p,v) & = \inf \left\{\|f\|_{A^2_d(D)}^2 : f \in A^2_d(D), f(p)=0, f'(p)=0,\right.\\
& \text{\hspace{3.5cm}}\left. K_{D,d}^{n-1}v^{t} f''(p) \ov \adj G_{D,d}(p) \ov{f''}(p)\ov v=1\right\}.
\end{aligned}
\end{equation}
Here $f''(p)$ is the symmetric matrix
\[
\begin{pmatrix}\frac{\pa^2 f}{\pa z_i\pa z_j}(p)\end{pmatrix}_{n \times n},
\]
and $\adj G_{D,d}(p)$ is the adjugate of the matrix $G_{D,d}(p)$. Note that though $I^0$ and $\la^k$ do not depend on $v$, the notations $I_{D,d}^0(p,v)$ and $\la^k_{D,d}(p,v)$ will be used purely for convenience. By Montel's theorem, these infimums are realized. Observe that as $\adj G_{D,d}(p) = g_{D,d}(p) G_{D,d}(p)^{-1}$, we have
\begin{equation}\label{I-M}
\begin{aligned}
I_{D,d}(p,v) & ={K_{D,d}^{n-1}(p)g_{D,d}(p)} M_{D,d}(p,v)\\
&  ={K_{D,d}^{n-1+\frac{1}{d+1}}(p)\be_{D,d}(p)}M_{D,d}(p,v).
\end{aligned}
\end{equation}
If $F: D \to D'$ is a biholomorphism and $\mathcal{I}$ is any of the minimum integrals $I^k$ and $I$, then
\begin{equation}\label{trans-min-int}
\mathcal{I}_{D,d}(p,v)= \big\vert \det F^{\prime}(p)\big\vert^{-2d-2}\mathcal{I}_{D',d}(F(p),F'(p)v).
\end{equation}
Indeed, if $g \in A^2_d(D')$ is a function realising $\mathcal{I}_{D',d}\big(F(p),F'(p)v\big)$, then a routine calculation shows that the function
\[
f(z)=\big(\det F'(p)\big)^{-d-1}\, g\circ F(z) \, \big(\det F'(z)\big)^{d+1},
\]
is a candidate for $\mathcal{I}_{D,d}(p,v)$ and so
\[
\mathcal{I}_{D,d}(p,v) \leq  \big\vert \det F'(p)\big\vert^{-2d-2}\mathcal{I}_{D',d}\big(F(p),F'(p)v\big).
\]
This inequality must hold for $F^{-1}:D'\to D$ as well and hence \eqref{trans-min-int} follows.
We also note the homogeneity property
\begin{equation}\label{homogeneity}
\begin{aligned}
I^k_{D,d}(p,v) & = \vert \al\vert^{-2k} I^1_{D,d}(p,v), \quad k=1,2,\\
I_{D,d}(p, \al v) & = \vert \al\vert^{-2} I_{D,d}(p,v), \quad M_{D,d}(p,\al v)  =  \vert \al\vert^{-2} M_{D,d}(p,v).
\end{aligned}
\end{equation}
The minimum integrals $I^k_{D,d}(p,v)$, $k=0,1,2$, are related to $K_{D,d}$, $\tau_{D,d}$, and $R_{D,d}$ by
\begin{equation}\label{metric-min-int}
\begin{aligned}
K_{D, d}(p) & =\frac{1}{I^0_{D,d}(p,v)},\\
\tau^2_{D,d}(p,v)& ={\frac{I^0_{D,d}(p,v)}{I^1_{D,d}(p,v)}}, \text{ and}\\
R_{D,d}(p,v) &= 2- \frac{\big(I^1_{D,d}(p,v)\big)^2}{I^0_{D,d}(p,v) I^2_{D,d}(p,v)}.
\end{aligned}
\end{equation}
These formulas can be derived in the same way as for the usual Bergman kernel and metric  and so we omit the details. In particular, the first relation when combined with
\[
K_{D,d}(p)=\big\|K_{D,d}(\cdot, p)\big\|_{A^2_d(D)}^2,
\]
which is a consequence of the reproducing property, implies that
\[
\left\|\frac{K_{D,d}(\cdot, p)}{K_{D,d}(p)}\right\|^2=\frac{1}{K_{D,d}(p)}=I^0_{D,d}(p,v),
\]
and so $K_{D,d}(\cdot, p)/K_{D,d}(p)$ uniquely realises the minimum integral $I^0_{D,d}(p,v)$. The third relation in \eqref{metric-min-int} implies that $R_{D,d} \leq 2$.

The minimum integrals, $\la_{D,d}^k(p,v)$, $I_{D,d}(p,v)$, and $M_{D,d}(p,v)$ for $d=0$ are due to Krantz-Yau \cite{KY}. More precisely, for the usual Bergman metric, these are the reciprocals of the extremal domain functions in Section~2 of the above mentioned paper which were introduced for localising the Bergman invariant and the Ricci curvature. It is to be noted the notational difference that in the above paper a vector in $\mf{C}^n$ is regarded as a row vector. To obtain their relation to $\be_{D,d}$ and $\Ric_{D,d}$, define
\[
\la_{D,d}(p)=\la_{D,d}^1(p) \la_{D,d}^2(p) \cdots \la_{D,d}^n(p).
\]
Solving the minimization problem associated to $\la^k_{D,d}(p)$ exactly as in the proof of 7(a) in Section 2, Chapter II of \cite{Berg70}, we obtain
\[
\la^{k}_{D,d}(p)=\frac{\det \begin{pmatrix}K_{ij}(p)\end{pmatrix}_{i,j=0}^{k-1}}{\det \begin{pmatrix} K_{ij}(p)\end{pmatrix}_{i,j=0}^{k}},
\]
where
\[
K_{ij}(p)=\frac{\pa^2 K_{D,d}}{\pa z_{i} \pa \ov z_j}(p), \quad 0 \leq i,j\leq n.
\]
Note that
\begin{multline*}
\det\begin{pmatrix} K_{ij}(p)\end{pmatrix}_{i,j=0}^{n}= \det
\begin{pmatrix}
K_{D,d}(p) & \pa K_{D,d}(p)\\
\ov \pa K_{D,d}(p) & \pa \ov \pa K_{D,d}(p)
\end{pmatrix}\\
=K_{D,d}^{n+1}(p) \det \begin{pmatrix}\pa \ov \pa \log K_{D,d} (p)\end{pmatrix}=K_{D,d}^{n+1(p)}g_{D,d}(p).
\end{multline*}
Thus,
\[
\la_{D,d}(p) =\frac{K_{00}(p)}{\det \begin{pmatrix}K_{ij}(p)\end{pmatrix}_{i,j=0}^{1}}\cdots  \frac{\det \begin{pmatrix}K_{ij}(p)\end{pmatrix}_{i,j=0}^{n-1}}{\det \begin{pmatrix}K_{ij}(p)\end{pmatrix}_{i,j=0}^{n}}=\frac{1}{K_{D,d}^{n}(p)g_{D,d}(p)}.
\]
Therefore, using \eqref{metric-min-int}
\begin{equation}\label{g-min-int}
g_{D,d}(p)=\frac{1}{K^n_{D,d}(p)\la_{D,d}(p)} = \frac{\big(I^0_{D,d}(p,v)\big)^n}{\la_{D,d}(p)},
\end{equation}
and
\begin{equation}\label{be-min-int}
\be_{D,d}(p) =\frac{g_{D,d}(p)}{K^{1/(d+1)}_{D,d}(p)}=\frac{1}{K_{D,d}^{n+\frac{1}{d+1}}(p)\la_{D,d} (p,v)} = \frac{\big(I^0_{D,d}(p,v)\big)^{n+\frac{1}{1+d}}}{\la_{D,d} (p,v)}.
\end{equation}
The arguments in the proof of Proposition~2.1 of \cite{KY} applied to $K_{D,d}$ yields
\begin{equation}\label{ric-min-in}
\text{Ric}_{D,d}(p,v) =(n+1)- \frac{1}{K_{D,d}(p) \tau_{D,d}^2(p,v)I_{D,d}(p,v)}.
\end{equation}
Now from \eqref{I-M}, \eqref{metric-min-int}, and \eqref{be-min-int}, we have
\begin{equation}\label{I-M-2}
I_{D,d}(p,v)=\frac{M_{D,d}(p,v)}{K_{D,d}(p)\la_{D,d}(p)}=\frac{I^0_{D,d}(p,v) M_{D,d}(p,v)}{\la_{D,d}(p,v)},
\end{equation}
and hence from \eqref{ric-min-in}, 
\begin{multline}\label{ric-min-int-2}
\text{Ric}_{D,d}(p,v) =(n+1)-\frac{\la_{D,d}(p,v)}{\tau^2_{D,d}(p,v)M_{D,d}(p,v)}\\
=(n+1)-\frac{I^1_{D,d}(p,v) \la_{D,d}(p)}{I^0_{D,d}(p,v) M_{D,d}(p,v)} .
\end{multline}

\subsection{Monotonicity of the minimum integrals}
Let $D\subset D'$, $p \in D$, and $v \in \mf{C}^n$ be a nonzero vector. It is evident from their definitions that
\begin{equation}\label{mon-min-int}
\begin{aligned}
I^k_{D,d}(p,v) & \leq I^k_{D',d}(p,v), \quad k=0,1,2,\\
\la^k_{D,d}(p) & \leq \la^k_{D',d}(p), \quad k=1, \ldots, n.
\end{aligned}
\end{equation}
We note that the minimum integral $I_{D,d}(p,v)$ is not monotonic in general (see the remark on page 236 of \cite{KY}) but $M_{D,d}(p,v)$ is. Indeed, the first and the second equation in \eqref{metric-min-int} gives
\[
K_{D,d}(p)\tau^2_{D,d}(p,v)=\frac{1}{I^1_{D,d}(p,v)},
\]
and hence by \eqref{mon-min-int},
\begin{equation}\label{ns-monotonicity}
K_{D,d}(p)\tau^2_{D,d}(p,v) \geq K_{D',d}(p)\tau^2_{D',d}(p,v).
\end{equation}
This can be written as
\begin{equation}\label{ns-matrix-mon}
K_{D,d}(p) v^{t}G_{D,d}(p)\ov v \geq K_{D',d}(p) v^{t}G_{D',d} \ov v.
\end{equation}
Now the arguments as in the proof of Proposition~2.2 in \cite{KY} gives
\begin{equation}\label{ns-inverse-monotonicity}
\frac{v^{t}\ov G_{D,d}^{-1}(p)\ov v}{K_{D,d}(p)}\leq \frac{v^{t}\ov G_{D',d}^{-1}(p)\ov v}{K_{D',d}(p)},
\end{equation}
and
\begin{equation}\label{mon-M}
M_{D,d}(p,v) \leq M_{D',d}(p,v).
\end{equation}


\subsection{Localization on domains of holomorphy}

\begin{lem}\label{loc-min-int-doh}
Let $D \subset \mf C^n$ be a bounded domain of holomorphy and $U_0 \subset \subset U$ be neighbourhoods of a point $p^0 \in \pa D$. Then there exist constants $c,C>0$ such that if $\mathcal{I}$ is any of the minimum integrals in \eqref{min-int},
\[
c\mathcal{I}_{U \cap D, d}(p,v) \leq \mathcal{I}_{D,d}(p,v) \leq C \mathcal{I}_{U \cap D,d}(p,v)
\]
for all $p \in U_0$ and $v \in \mf{C}^n$.
\end{lem}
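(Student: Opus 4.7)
The plan is to separate the two inequalities, using restriction for the easy direction and a cutoff-with-$\bar\partial$ construction for the hard one.

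\emph{Easy direction.} For the six minimum integrals $I^0, I^1, I^2, \lambda^1,\ldots,\lambda^n$, the constraint at $p$ involves only the Taylor coefficients of the competing function at $p$. Any $\tilde f\in A^2_d(D)$ attaining $\mathcal{I}_{D,d}(p,v)$ therefore restricts to a function in $A^2_d(U\cap D)$ satisfying the same constraint at $p$ with a smaller norm, whence $\mathcal{I}_{U\cap D,d}(p,v)\le \mathcal{I}_{D,d}(p,v)$ (which is the content of \eqref{mon-min-int}), so the left inequality holds with $c=1$.

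\emph{Hard direction.} Let $f\in A^2_d(U\cap D)$ realise $\mathcal{I}_{U\cap D,d}(p,v)$. Fix $U_0\subset\subset U_1\subset\subset U$ and a cutoff $\chi\in C^{\infty}_c(U)$ with $\chi\equiv 1$ on $U_1$. Then $\chi f$ (extended by zero) is smooth and compactly supported on $\mathbb{C}^n$, and $g:=\bar\partial(\chi f)=(\bar\partial\chi)\,f$ is supported in $K:=\supp(\bar\partial\chi)\cap D\subset (U\setminus U_1)\cap D$, a set staying at positive euclidean distance from every $p\in U_0$. Since $D$ is pseudoconvex (as a domain of holomorphy) and $\log K_D$ is plurisubharmonic, I apply H\"ormander's $L^2$-existence theorem on $D$ with the plurisubharmonic weight
\[
\varphi(z)=d\log K_D(z)+2N\log|z-p|+|z|^2,
\]
where $N$ is a large integer (any $N\ge n+2$ will do) ensuring that $L^2(e^{-\varphi})$-integrability forces vanishing at $p$ to order $\ge 3$. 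The term $|z|^2$ supplies the required strict plurisubharmonicity $i\partial\bar\partial\varphi\ge \delta\,\mathrm{id}$, and H\"ormander produces $u$ with $\bar\partial u=g$ and
\[
\|u\|^2_{A^2_d(D)}\;\lesssim\;\int_K |f|^2 K_D^{-d}\,dV\;\lesssim\;\|f\|^2_{A^2_d(U\cap D)},
\]
where the first inequality absorbs the uniformly bounded factors $|\bar\partial\chi|^2$, $|z-p|^{-2N}$ and $e^{-|z|^2}$ on $K$, and the second uses that $K_D\sim K_{U\cap D}$ on $\supp(\chi)\cap D$ (classical Bergman-kernel localization for domains of holomorphy, \cite{DFH84}). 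Taking $\tilde f=\chi f-u$ yields a holomorphic function on $D$ lying in $A^2_d(D)$ with controlled norm and sharing the same $2$-jet at $p$ as $f$ (since $\chi\equiv 1$ near $p$ and $u$ vanishes there to order $\ge 3$); hence it is a legitimate competitor for $\mathcal{I}_{D,d}(p,v)$, giving the right inequality for $\mathcal{I}\in\{I^0,I^1,I^2,\lambda^1,\ldots,\lambda^n\}$.

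\emph{The remaining integrals $I_{D,d}$ and $M_{D,d}$.} Their constraints at $p$ depend on $K_{D,d}(p)$ and $G_{D,d}(p)$, and hence on $D$. Combining the identities \eqref{I-M} and \eqref{I-M-2} with the two-sided comparisons of $K_{D,d}(p)$, $G_{D,d}(p)$ and $g_{D,d}(p)$ between $D$ and $U\cap D$ (already obtained from the $I^0$-, $I^1$- and $\lambda^k$-cases) makes the normalizing Hermitian forms in the defining constraints uniformly comparable on $U_0$. A competitor on $U\cap D$ for $M_{U\cap D,d}(p,v)$ then produces, after a bounded rescaling to meet the $D$-normalization and after applying the $\bar\partial$-extension above, a competitor on $D$ for $M_{D,d}(p,v)$; the two-sided estimate for $I_{D,d}$ follows similarly. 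The main obstacle throughout is the $\bar\partial$-step: even though the measure $K_D^{-d}\,dV$ is singular near $\partial D$, the exponent $d\log K_D$ is plurisubharmonic (because $\log K_D$ is), which is exactly what H\"ormander's theorem requires with the weight $e^{-d\log K_D}=K_D^{-d}$; once this is in place, everything else is careful bookkeeping of constants.
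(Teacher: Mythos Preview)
Your argument is the same as the paper's: monotonicity in one direction, cutoff plus H\"ormander with the plurisubharmonic weight $d\log K_D+2N\log|z-p|$ in the other; your added $|z|^2$ plays the role of the paper's $(1+|z|^2)^{-2}$ factor and is purely cosmetic, and your $N\ge n+2$ matches the paper's exponent $2n+6$.

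You are in fact more explicit than the paper at two places. First, bounding $\|\chi f\|_{A^2_d(D)}$ and the $\bar\partial$-data term by $\|f\|_{A^2_d(U\cap D)}$ requires $K_D^{-d}\lesssim K_{U\cap D}^{-d}$ on $\supp\chi\cap D$, which is \emph{not} the monotone direction; you correctly pull this from the unweighted localization \cite{DFH84}, whereas the paper writes $\|\chi f\|_{A^2_d(D)}\le \|f\|_{A^2_d(U\cap D)}$ without comment. Second, for $M_{D,d}$ and $I_{D,d}$ the normalizing constraint involves $K_{D,d}(p)$ and $G_{D,d}(p)$ and hence changes with the domain, so the extension $F_p$ is not literally a competitor for $\mathcal I_{D,d}(p,v)$; your bounded rescaling, driven by the already-established two-sided comparisons of $I^0,I^1,\lambda^k$ (and thus of $K_{D,d},G_{D,d},g_{D,d}$), is the honest way to produce one. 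The paper instead asserts directly that $F_p$ is a candidate for every $\mathcal I$, and handles the lower bound for $I$ via the identity \eqref{I-M-2} together with the monotonicity \eqref{mon-M} of $M$. Either route works once these points are made explicit.
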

\begin{proof}

We begin with the following:
\begin{claim}There exists a constant $C>0$ such that for any $p \in U_0$ and $f \in A^2_d(U\cap D)$, we can find a $F_p \in A^2_d(D)$ that agrees with $f$ at $p$ up to second derivatives and
\[
\|F_p\|^2_{A^2_d(D)} \leq C \|f\|_{A^2_d(U \cap D)}.
\]
\end{claim}
To prove this claim, we choose a neighbourhood $U_1$ of $p$, where $U_0 \subset\subset U_1 \subset\subset U$, and a cut-off function $\chi \in C_0^{\infty}(U)$, with $0 \leq \chi \leq 1$ and $\chi\equiv 1$ on $U_1$. Let $p \in U_0$ and $f \in A^2_d(U \cap D)$. Define the $(0,1)$ form
\[
\al_f=\ov \pa (\chi f)
\]
which is smooth, $\ov \pa$-closed on $D$, and vanishes on $U_1 \cap D$. Also, consider the plurishubharmonic function $\phi_p$ on $D$ defined by
\[
\phi_p(z)=(2n+6)\log \vert z-p\vert+d\log K_D(z).
\]
Now apply Theorem 4.2 of \cite{Hor} to obtain a solution $u_p$ to the equation $\ov \pa u=\al_f$ on $D$ satisfying
\begin{equation}\label{soln-est-1}
\int_D \frac{\big\vert u_p(z)\big\vert^2}{\vert z-p\vert^{2n+6}(1+\vert z\vert^2)^2} K_D^{-d}(z)\, dV(z) \leq \int_{D} \frac{\vert \al_f\vert^2}{\vert z-p\vert^{2n+6}}K_D^{-d}(z)\,dV(z).
\end{equation}
But $\supp \al_f\subset(U \setminus U_1) \cap D$ and there $\al_f$ satisfies
\[
\vert \al_f \vert^2 \leq c_1 \vert f\vert^2
\]
for some constant $c_1>0$ independent of $p$ and $f$. Moreover there exists a constant $c_2>0$ such that
\[
\vert z-p\vert^{2n+6} \geq c_2
\]
for all $z \in (U \setminus U_1) \cap D$ and $p \in U_0$. Hence, it follows from \eqref{soln-est-1} that
\begin{equation}\label{soln-est-2}
\int_D \frac{\big\vert u_p(z)\big\vert^2}{\vert z-p\vert^{2n+6}(1+\vert z\vert^2)^2} K_D^{-d}(z)\, dV(z) \leq c_3 \|f\|^2_{A^2_d(U\cap D)}
\end{equation}
where $c_3>0$ is a constant independent of $f$ and $p$. On the other hand since $D$ is bounded, there exists a constant $c_4>0$ such that
\[
\vert z-p\vert^{2n+6}(1+\vert z \vert^2)^2\leq c_4
\]
for all $z, p\in D$. Consequently,
\begin{equation}\label{soln-lb-1}
\int_D \frac{\big\vert u_p(z)\big\vert^2}{\vert z-p\vert^{2n+6}(1+\vert z\vert^2)^2} K_D^{-d}(z)\, dV(z) \geq c_5 \|u_p\|^2_{A^2_d(D)}
\end{equation}
where $c_5>0$ is a constant independent of $f$ and $p$. Combining \eqref{soln-est-2} and \eqref{soln-lb-1}, there exists a constant $c_6$ independent of $f$ and $p$, such that
\begin{equation}\label{soln-est-data}
\|u_p\|_{A^2_d(D)} \leq c_6\|f\|_{A^2_d(U\cap D)}.
\end{equation}
Also, note that $u_p$ is holomorphic on $U_1 \cap D$ and  if $\ov B(p,\ep) \subset U_1 \cap D$ then by \eqref{soln-est-2}
\[
\int_{B(p,\ep)} \frac{\vert u_p(z)\vert^2}{\vert z-p\vert^{2n+6}} dV < \infty.
\]
This implies that $u_p$ vanishes at $p$ up to the second derivative. Finally, set $F_p=\chi f-u_p$. Then $F_p$ agrees with $f$ at $p$ up to the second derivatives. Moreover
\[
\|F_p\|_{A^2_d(D)} \leq \|\chi f\|_{A^2_d(D)}+\|u_p\|_{A^2_d(D)} \leq \|f\|_{A^2_d(U\cap D)}+c_6\|f\|_{A^2_d(U\cap D)} 
\]
using \eqref{soln-est-data}, which proves the claim with $C=(1+c_6)$.

To complete the proof of the lemma, let $p \in U\cap D$, $v \in \mf{C}^n$ be a nonzero vector, and $f \in A^2_d(U\cap D)$ be a minimizing function for $\mathcal{I}_{U\cap D}(p,v)$. Let $C$ and $F_p$ be as given by the claim. Then observe that $F$ is a candidate for $\mathcal{I}_{D}(p,v)$ and so
\begin{equation}\label{mathcal-I-ub}
\mathcal{I}_{D}(p,v) \leq C \mathcal{I}_{U\cap D}(p,v).
\end{equation}
For the reverse inequality, note that if $\mathcal{I} \neq I$, then by the monotonicity of $\mathcal{I}$
\begin{equation}\label{mathcal-I-lb1}
\mathcal{I}_{D,d}(p,v) \geq \mathcal{I}_{U\cap D,d}(p,v)
\end{equation}
and so we can take $c=1$. For $\mathcal{I}=I$, we write $I_{D,d}(p,v)$ in terms of $I^0_{D,d}(p,v)$, $\la_{D,d}(p,v)$, and $M_{D,d}(p,v)$ as given by \eqref{I-M-2}, and then using \eqref{mathcal-I-ub}  and \eqref{mathcal-I-lb1} for the latter minimum integrals,
\[
I_{D,d}(p,v) \geq \frac{I^0_{U\cap D,d} (p,v)M_{U\cap D,d}(p,v)}{C\la_{U\cap D}(p,v)}=\frac{1}{C} I_{U\cap D, d}(p,v).
\]
This completes the proof.
\end{proof}

In view of \eqref{metric-min-int}, \eqref{g-min-int}, \eqref{be-min-int}, and \eqref{ric-min-int-2}, this lemma immediately gives

\begin{thm}\label{localise-doh}
Let $D \subset \mf C^n$ be a bounded domain of holomorphy and $U_0 \subset \subset U$ be neighbourhoods of a point $p^0 \in \pa D$. Then there exist constants $c,C>0$ such that
\begin{equation*}
\begin{aligned}
& cK_{U \cap D, d}(p) \leq K_{D, d}(p) \leq  K_{U\cap D, d}(p),\\
& cg_{U \cap D, d}(p) \leq g_{D, d}(p) \leq  g_{U\cap D, d}(p),\\
& c\be_{U \cap D, d}(p) \leq \be_{D, d}(p) \leq  \be_{U\cap D, d}(p),\\
& c\tau_{U \cap D,d}(p,v) \leq \tau_{D,d}(p,v) \leq C \tau_{U \cap D, d}(p,v),\\
& cR_{U \cap D,d}(p,v) \leq 2-R_{D,d}(z,v) \leq C R_{U \cap D, d}(p,v),\\
& c\Big((n+1)-\Ric_{U \cap D,d}(p,v)\Big) \leq n+1-R_{D,d}(z,v)\\
&\text{\hspace{5cm}}\leq C \Big((n+1)-\Ric_{U \cap D, d}(p,v)\Big),
\end{aligned}
\end{equation*}
for all $p \in U_0$ and $v \in \mf{C}^n$.
\end{thm}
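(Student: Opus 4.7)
The statement is a direct corollary of Lemma~\ref{loc-min-int-doh} combined with the representations \eqref{metric-min-int}, \eqref{g-min-int}, \eqref{be-min-int}, and \eqref{ric-min-int-2}, which express each invariant as a rational combination of the minimum integrals listed in \eqref{min-int}. The whole proof is substitution plus bookkeeping.

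The plan is as follows. By Lemma~\ref{loc-min-int-doh}, for every minimum integral $\mathcal I$ in \eqref{min-int} there exist $c_{\mathcal I}, C_{\mathcal I}>0$ such that
\[
c_{\mathcal I}\,\mathcal I_{U\cap D,d}(p,v) \leq \mathcal I_{D,d}(p,v) \leq C_{\mathcal I}\,\mathcal I_{U\cap D,d}(p,v)
\]
for all $p\in U_0$ and $v\in\mf C^n$. Since only finitely many integrals intervene (namely $I^0, I^1, I^2, \la^1,\ldots,\la^n, \la, M, I$), we may pass to a single uniform pair $(c, C)$. Moreover, for all of these except $I$, the upper bound in fact holds with $C=1$ by the monotonicity \eqref{mon-min-int}, which is obtained by restricting candidate functions from $D$ to $U\cap D$.

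Given these uniform two-sided bounds, substitute into the representations $K_{D,d}=1/I^0_{D,d}$, $\tau^2_{D,d}=I^0_{D,d}/I^1_{D,d}$, and $2-R_{D,d}=(I^1_{D,d})^2/(I^0_{D,d}\,I^2_{D,d})$ from \eqref{metric-min-int}; $g_{D,d}=(I^0_{D,d})^n/\la_{D,d}$ from \eqref{g-min-int}; $\be_{D,d}=(I^0_{D,d})^{n+1/(d+1)}/\la_{D,d}$ from \eqref{be-min-int}; and $(n+1)-\Ric_{D,d}=I^1_{D,d}\,\la_{D,d}/(I^0_{D,d}\,M_{D,d})$ from \eqref{ric-min-int-2}. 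For $R_{D,d}$ and $\Ric_{D,d}$, the defect quantities $2-R_{D,d}$ and $(n+1)-\Ric_{D,d}$ are non-negative (the former from the remark $R\le 2$ following \eqref{metric-min-int}), so the two-sided bounds on the minimum integrals propagate directly to two-sided bounds on those defects. For $K_{D,d}$, $g_{D,d}$, and $\be_{D,d}$, the unit constant in the upper bound is inherited from the sharp monotonicity in \eqref{mon-min-int} applied to $I^0$ and $\la$; for $g_{D,d}$ this can also be read off from the matrix-level inequality \eqref{ns-matrix-mon} by taking determinants.

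The only obstacle is bookkeeping: when inverting (as in $K=1/I^0$), taking products (as in $\la=\prod_k\la^k$), or forming ratios (as in $\tau^2=I^0/I^1$), one must check that the resulting constants depend only on $c, C, n, d$. In particular, powers of $n$ enter via the factor $(I^0)^n$ and the product over the $\la^k$, producing constants of the form $C^n/c$ and similar, which is harmless. I do not expect any genuine technical difficulty beyond this.
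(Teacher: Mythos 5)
Your overall approach is exactly the paper's: the theorem is stated there as an immediate consequence of Lemma~\ref{loc-min-int-doh} together with the representations \eqref{metric-min-int}, \eqref{g-min-int}, \eqref{be-min-int}, \eqref{ric-min-int-2}, and the finitely many minimum integrals can indeed be controlled by a single uniform pair of constants. Two details in your bookkeeping are off, though. First, the direction that holds with unit constant is the \emph{lower} bound for the minimum integrals, not the upper: restricting a candidate from $D$ to $U\cap D$ (and noting $K_{U\cap D}^{-d}\le K_D^{-d}$ on $U\cap D$) gives $\mathcal I_{U\cap D,d}\le \mathcal I_{D,d}$, i.e.\ $c=1$ in $c\,\mathcal I_{U\cap D,d}\le \mathcal I_{D,d}\le C\,\mathcal I_{U\cap D,d}$; this is precisely what the proof of Lemma~\ref{loc-min-int-doh} records. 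That flip is what turns into the unit constant on the \emph{upper} side for $K_{D,d}=1/I^0_{D,d}$, so the end conclusion for $K$ is fine, but as written your intermediate statement is reversed. Second, your justification for the unit upper constant on $g_{D,d}$ (and hence $\beta_{D,d}$) does not go through. Monotonicity gives $I^0_{D,d}\ge I^0_{U\cap D,d}$ and $\lambda_{D,d}\ge\lambda_{U\cap D,d}$, which push the ratio $g_{D,d}=(I^0_{D,d})^n/\lambda_{D,d}$ in opposite directions, so you cannot conclude $g_{D,d}\le g_{U\cap D,d}$ from those two inequalities alone; and taking determinants in \eqref{ns-matrix-mon} yields $K_{U\cap D,d}^n\,g_{U\cap D,d}\ge K_{D,d}^n\,g_{D,d}$, which is weaker than $g_{U\cap D,d}\ge g_{D,d}$ because $K_{U\cap D,d}\ge K_{D,d}$ makes the prefactor work against you. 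What one \emph{does} get from Lemma~\ref{loc-min-int-doh} and the representations is a two-sided bound with constants of the form $C^n$, $1/\prod_k C_k$, etc.\ as you describe, which suffices for the theorem; the sharper unit constant claimed for $g$ and $\beta$ would need an argument not reducible to \eqref{mon-min-int} or the determinant of \eqref{ns-matrix-mon}, and the paper's one-line proof is itself elliptic on this point.
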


\subsection{Localization near peak points}

While the localization of the weighted kernel and the Narasimhan--Simha type metric in Theorem~\ref{localise-doh} are in terms of small--large--constants, more precise estimates can be given near a holomorphic peak point as was observed for the Bergman kernel and metric in \cite{KY}. 
\begin{lem}\label{loc-min-int-peak}
Let $D$ be a bounded pseudoconvex domain and $p^0 \in \pa D$ be a local holomorphic peak point of $D$. Then for a sufficiently small neighbourhood $U$ of $p^0$,
\[
\lim_{p \to p^0}\frac{\mathcal{I}_{U \cap D, d}(p,v)}{\mathcal{I}_{D,d}(p,v)}=1,
\]
where $\mathcal{I}$ is any of the minimum integrals in \eqref{min-int}. Moreover, the convergence is uniform on $\big\{ \Vert v \Vert = 1\big\}$.
\end{lem}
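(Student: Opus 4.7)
The plan is to sandwich the ratio $\mathcal{I}_{U\cap D, d}(p,v)/\mathcal{I}_{D, d}(p,v)$ between quantities tending to $1$, uniformly on $\{\|v\|=1\}$. The lower bound $\ge 1$ is immediate: it is the monotonicity \eqref{mon-min-int} and \eqref{mon-M} for $\mathcal{I} \in \{I^0, I^1, I^2, \la^k, M\}$, and follows from the factorization $I_{D,d} = I^0_{D,d}M_{D,d}/\la_{D,d}$ in \eqref{I-M-2} for $\mathcal{I}=I$. The entire content of the lemma is therefore the matching upper bound.

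For this, fix a local holomorphic peak function $h$ at $p^0$: holomorphic on a neighbourhood $W \supset\supset U$, with $h(p^0)=1$ and $|h(z)|<1$ on $\overline{W\cap D}\setminus\{p^0\}$. Given a minimizer $f \in A^2_d(D)$ of $\mathcal{I}_{D,d}(p,v)$, I transplant it to an admissible $\tilde f \in A^2_d(U\cap D)$ as follows. Fix a cutoff $\chi \in C_c^\infty(U)$ with $\chi \equiv 1$ on some $U_0 \subset\subset U$ and an integer $N$, and consider $\psi(z) = \chi(z)(h(z)/h(p))^N$. Since $\psi(p)=1$ and all derivatives of $\chi$ vanish at $p$, the product $\psi f$ matches $f$ at $p$ up to second derivatives. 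A single $\bar\partial$-correction on the pseudoconvex domain $U \cap D$, via H\"{o}rmander's theorem with the singular plurisubharmonic weight $(2n+6)\log|z-p|+d\log K_{U\cap D}(z)$, upgrades $\psi f$ to a holomorphic $\tilde f = \psi f - u$ with $u$ vanishing at $p$ to second order, exactly as in the proof of Lemma~\ref{loc-min-int-doh}.

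The core estimate: given $\eta > 0$, choose $N = N(\eta)$ so large that $|h|^{2N}<\eta$ on $\overline{W\cap D}\setminus U_0$. Combining $K_{U\cap D}^{-d}\le K_D^{-d}$, the support condition $\supp \bar\partial\chi \subset U\setminus U_0$, and the bounds $|h|^{2N}\le 1$ on $U_0$ together with $|h|^{2N}<\eta$ on the annulus, one obtains
\[
\mathcal{I}_{U\cap D,d}(p,v) \le \|\tilde f\|^2_{A^2_d(U\cap D)} \le \frac{1+O(\eta)}{|h(p)|^{2N}}\,\mathcal{I}_{D,d}(p,v).
\]
Since $|h(p)|\to 1$ as $p\to p^0$, the prefactor tends to $1+O(\eta)$; as $\eta > 0$ was arbitrary, $\limsup \mathcal{I}_{U\cap D,d}(p,v)/\mathcal{I}_{D,d}(p,v) \le 1$.

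Uniformity on $\{\|v\|=1\}$ costs nothing because $h$, $\chi$, $U_0$, $N(\eta)$, and all H\"{o}rmander constants are independent of $v$; the $v$-dependence enters only through the minimizer's $A^2_d(D)$-norm. The main obstacle is the bookkeeping for $\mathcal{I} \in \{I, M\}$, whose constraints couple $f''(p)$ with $G_{D,d}(p)$ or $\adj G_{D,d}(p)$: transplanting $f$ changes the normalization from $K_{D,d}(p)$, $G_{D,d}(p)$ to $K_{U\cap D,d}(p)$, $G_{U\cap D,d}(p)$, so one must first run the argument for the $I^0$ and $\la^k$ cases (which involve no such coupling), and then use the monotonicities \eqref{ns-matrix-mon} and \eqref{ns-inverse-monotonicity} together with the small--large bounds of Theorem~\ref{localise-doh} to absorb the resulting normalization factor into $(1+O(\eta))$.
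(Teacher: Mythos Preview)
Your argument has the directions reversed. Monotonicity \eqref{mon-min-int}, \eqref{mon-M} says that for $D_1\subset D_2$ one has $\mathcal I_{D_1,d}\le \mathcal I_{D_2,d}$ (smaller domain, \emph{smaller} infimum, since every competitor on $D_2$ restricts to a competitor on $D_1$ with no larger norm). Applied to $U\cap D\subset D$ this gives
\[
\frac{\mathcal I_{U\cap D,d}(p,v)}{\mathcal I_{D,d}(p,v)}\le 1,
\]
not $\ge 1$. So what you call the ``immediate lower bound'' is actually the trivial upper bound, and the nontrivial content of the lemma is precisely the bound $\liminf\,\mathcal I_{U\cap D,d}/\mathcal I_{D,d}\ge 1$, equivalently $\limsup\,\mathcal I_{D,d}/\mathcal I_{U\cap D,d}\le 1$.

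Correspondingly, the transplant has to go the other way: start from a minimizer $f\in A^2_d(U\cap D)$ for $\mathcal I_{U\cap D,d}(p,v)$ and produce a competitor $F\in A^2_d(D)$ for $\mathcal I_{D,d}(p,v)$, with $\|F\|^2_{A^2_d(D)}\le(1+o(1))\|f\|^2_{A^2_d(U\cap D)}$. This genuinely needs the $\bar\partial$-correction on $D$ (not on $U\cap D$), because extending a holomorphic function from a smaller domain to a larger one is nontrivial. Your construction---cutoff times $(h/h(p))^N$ times a minimizer on $D$, then $\bar\partial$-solve on $U\cap D$---only reproduces the restriction inequality you already had for free; indeed any $f\in A^2_d(D)$ is already holomorphic on $U\cap D$ with smaller weighted norm there, so no $\bar\partial$ step is needed in that direction. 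The peak function and H\"ormander estimates are doing no work in your proof. Once you swap the roles of $D$ and $U\cap D$ (solve $\bar\partial(\chi f h^N)$ on $D$ with the weight $(2n+6)\log|z-p|+d\log K_D$, set $F=(\chi f h^N-u)/h(p)^N$), the rest of your outline---including the treatment of $\mathcal I=I$ via the factorization \eqref{I-M-2}---goes through and matches the paper's approach.
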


\begin{proof}
Let $h$ be a local holomorphic peak function at $p^0$ defined in a neighbourhood $U$ of $p^0$. Take any neighbourhood $U_0$ of $p^0$ such that $U_0 \subset\subset U$ and $h$ is nonvanishing on $U_0$.
\begin{claim}
There exist constants $0<a<1$ and $c>0$ depending only on $U$ with the following property: given any $p\in U_0$, a function $f \in A^2_d(U \cap D)$, and an integer $N\geq 1$, there exists $F_{p,N} \in A^2_d(D)$ satisfying
\begin{equation}\label{F-est}
\|F_{p,N}\|_{A^2_d(D)}  \leq \frac{1+ ca^N}{\big\vert h(p)\big\vert^N} \|f\|_{A^2_d(U \cap D)},
\end{equation}
and such that (i) $F_{p,N}(p)=f(p)$, (ii) $F'_{p,N}(p)=f'(p)$ if $f(p)=0$, (iii) $v^{T}F_{p,N}''(p)v=v^{T}f''(p)v$ if $f(p)=0$ and $f'(p)v=0$ where $v \in \mf{C}^n$, and (iv) $F_{p,N}''(p)=f''(p)$ if $f(p)=0$ and $f'(p)=0$.
\end{claim}
To prove this claim, choose a neighbourhood $U_1$ of $p^0$ such that $U_0 \subset\subset U_1\subset\subset U$. Then there is a constant $a\in (0,1)$ such that $\vert h \vert <a$ on $\ov {\big((U \setminus U_1) \cap D\big)}$. Choose a cut-off function $\chi \in C_0^{\infty}(U)$ satisfying $0 \leq \chi \leq 1$ on $U$, and $\chi \equiv1$ on $U_1$. Let $p \in U_0$ and $f \in A^2_d(U \cap D)$. Define the $(0,1)$ form
\[
\al_f=\ov \pa(\chi f h^N),
\]
which is smooth and $\ov \pa$-closed on $D$. Moreover, $\supp \al_f \subset (U \setminus U_1) \cap D$ and there $\al_f$ it satisfies
\[
\vert \al_f \vert^2 = \vert \ov \pa \chi \vert^2 \vert f \vert^2 \vert h\vert^2 \leq c_1a^{2N} \vert f\vert^2,
\]
for some constant $c_1>0$ independent of $p$ and $f$. Also, consider the plurishubharmonic function $\phi_p$ on $D$ defined in Lemma~\ref{loc-min-int-doh} and solve the equation $\ov \pa u=\al$ with $L^2$-estimates as in there. Repeating the same arguments in that lemma, we arrive at a solution $u_p$ which vanishes at $p$ up to the second derivatives and which satisfies
\begin{equation}\label{peak-soln-est-data}
\|u_p\|_{A^2_d(D)} \leq c \vert a \vert^{N}\|f\|_{A^2_d(U\cap D)},
\end{equation}
where $c$ is a constant independent of $p$ and $f$. Define
\begin{equation}\label{F-defn}
F_{p,N}=\frac{\chi f h^N-u_p}{h^N(p)}.
\end{equation}
Then $F_{p,N}$ belongs to $A^2_d(D)$ and 
\begin{align*}
\big\vert h(p)\big\vert^N\|F_{p,N}\|_{A^2_d(D)} \leq \|\chi f h^N\|_{L^2_d(D)} + \|u_p\|_{L^2_d(D)}
 \leq \|f\|_{A^2_d(U \cap D)}+ ca^N \|f\|_{A^2_d(U \cap D)},
\end{align*}
using \eqref{peak-soln-est-data} and thus $F_{p,N}$ satisfies the estimate \eqref{F-est}. We now proceed to show that $F_{p,N}$ satisfies the other properties in our claim. First note that (i) follows from \eqref{F-defn}. Now differentiating $F_{p,N}$ in $U_0$ and as $\chi=1$ there, we obtain
\begin{equation*}
\frac{\pa F_{p,N}}{\pa z_j}=\frac{1}{h^N(p)} \left\{\frac{\pa f}{\pa z_j} h^N+ N f h^{N-1} \frac{\pa h}{\pa z_j} -\frac{\pa u_p}{\pa z_j}\right\}, \quad j=1, \ldots n,
\end{equation*}
from which (ii) follows. Differentiating the above equation, we obtain that if $f(p)=0$, then
\begin{multline*}
\frac{\pa^2 F_{p,N}}{\pa z_i \pa z_j}(p)= \frac{1}{h^N(p)} \left\{h^N(p)\frac{\pa^2f}{\pa z_i\pa z_j}(p)  + N h^{N-1}(p)\frac{\pa f}{\pa z_j}(p) \frac{\pa h}{\pa z_i}(p)\right.\\
\left.+N h^{N-1}(p)\frac{\pa f}{\pa z_i}(p) \frac{\pa h}{\pa z_j}(p)\right\},
\end{multline*}
for $1 \leq i,j\leq n$, from which (iii) and (iv) follows. This proves our claim.

Now to complete the proof of the lemma, fix $p\in U_0$, $v\in \mf{C}^n \setminus\{0\}$, and let $f$ be a minimizing function for $\mathcal{I}_{U \cap D}(p,v)$. Let the constants $c,a$, and the function $F_{p,N}$ be as given by the claim. Then observe that $F_{p,N}$ is a candidate for $\mathcal{I}_{ D}(p,v)$ and so \eqref{F-est} gives
\begin{equation}\label{I-ub}
\frac{\mathcal{I}_{D}(p,v)}{\mathcal{I}_{U \cap D}(p,v)} \leq \frac{(1+ca^{N})^2}{\big\vert h(p)\big\vert^{2N}}.
\end{equation}
First, let $p \to p^0$ to get
\[
\limsup_{p \to p^0}\frac{\mathcal{I}_{D,d}(p,v)}{ \mathcal{I}_{U \cap D, d}(p,v)} \leq (1+ca^{N})^2,
\]
and then let $N \to \infty$ to get
\begin{equation}\label{ulimit-mathcal-I}
\limsup_{p \to p^0}\frac{\mathcal{I}_{D,d}(p,v)}{\mathcal{I}_{U \cap D, d}(p,v)} \leq 1.
\end{equation}

Now suppose $\mathcal{I}\neq I$. Then by the monotonicity of $\mathcal{I}$, we have $\mathcal{I}_{D,d} \geq \mathcal{I}_{U\cap D,d}$, and so
\begin{equation}\label{llimit-mathcal-I}
\liminf_{p\to p^0} \frac{\mathcal{I}_{D,d}(p,v)}{\mathcal{I}_{U \cap D, d}(p,v)} \geq 1.
\end{equation}
Combining \eqref{ulimit-mathcal-I} and \eqref{llimit-mathcal-I}, we obtain
\begin{equation}\label{loc-mathcal-I}
\lim_{p \to p^0} \frac{\mathcal{I}_{D,d}(p,v)}{\mathcal{I}_{U \cap D, d}(p,v)}=1.
\end{equation}
Note that the right hand side of \eqref{I-ub} is independent of $v$ and hence the convergence is uniform in unit vectors $v \in \mf{C}^n$.

Now let $\mathcal{I}=I$. We express $I$ in terms of $I^0$, $\la$, and $M$ as in \eqref{I-M-2}, and then using monotonicity of $I^0$ and $M$ we obtain
\[
\frac{I_{D,d}(p,v)}{I_{U \cap D, d}(p,v)} \geq \frac{\la_{U\cap D,d}(p)}{\la_{D,d}(p)}.
\]
Applying \eqref{llimit-mathcal-I} to $\la$,
\begin{equation}\label{llimit-I}
\liminf_{p \to p^0} \frac{I_{D,d}(p,v)}{I_{U \cap D, d}(p,v)} \geq 1.
\end{equation}
Combining this with \eqref{ulimit-mathcal-I} and \eqref{llimit-I}, we obtain
\[
\lim_{p \to p^0} \frac{I_{D,d}(p,v)}{I_{U \cap D, d}(p,v)}=1.
\]
Again, as the right hand side of \eqref{I-ub} is independent of $v$, the convergence is uniform in unit vectors $v \in \mf{C}^n$. This completes the proof of the lemma.
\end{proof}

In view of \eqref{metric-min-int}, \eqref{g-min-int}, \eqref{be-min-int}, and \eqref{ric-min-int-2}, we have

\begin{thm}\label{localise-peak}
Let $D\subset \mf{C}^n$ be a bounded pseudoconvex domain and $p^0 \in \pa D$ be a local holomorphic peak point of $D$. Then for a sufficiently small neighbourhood $U$ of $p^0$,
\begin{align*}
\lim_{p \to p^0}\frac{K_{U \cap D, d}(p)}{K_{D,d}(p)}=1, \quad \lim_{p \to p^0}\frac{g_{U \cap D, d}(p)}{g_{D,d}(p)}, \quad \lim_{p \to p^0} \frac{\be_{U\cap D,d}(p)}{\be_{D,d}(p)}=1,
\end{align*}
and also
\begin{align*}
& \lim_{p \to p^0}\frac{\tau_{U \cap D,d}(p,v)}{\tau_{D,d}(p,v)}=1,  \quad  \lim_{p \to p^0} \frac{2-R_{U \cap D}(p,v)}{2-R_{D}(p,v)}=1,\quad \text{and}\\ 
& \lim_{p \to p^0} \frac{n+1-\Ric_{U \cap D}(p,v)}{n+1-\Ric_{D}(p,v)}=1,
\end{align*}
uniformly on $\{ \Vert v \Vert = 1\}$.
\end{thm}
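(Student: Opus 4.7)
The plan is to reduce Theorem~\ref{localise-peak} directly to Lemma~\ref{loc-min-int-peak}: each of the six invariants appearing in the statement has already been written as an explicit rational monomial in the minimum integrals $I^0_{D,d}$, $I^1_{D,d}$, $I^2_{D,d}$, $\lambda_{D,d}$, and $M_{D,d}$ via the identities \eqref{metric-min-int}, \eqref{g-min-int}, \eqref{be-min-int}, and \eqref{ric-min-int-2}. I would simply substitute the lemma's conclusion---that the ratio $\mathcal{I}_{U \cap D, d}(p,v)/\mathcal{I}_{D,d}(p,v)$ tends to $1$ uniformly in unit $v$ for every such $\mathcal{I}$---into each of these identities, and then invoke elementary limit laws for finite products and quotients.

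In more detail, from $K_{D,d}(p) = 1/I^0_{D,d}(p,v)$ and $\tau_{D,d}^2(p,v) = I^0_{D,d}(p,v)/I^1_{D,d}(p,v)$ in \eqref{metric-min-int}, the ratios $K_{U\cap D,d}/K_{D,d}$ and $\tau_{U\cap D,d}/\tau_{D,d}$ convert directly into quotients of ratios of $I^0$'s and $I^1$'s, each of which tends to $1$. For the holomorphic sectional curvature, the third identity in \eqref{metric-min-int} gives
\[
\frac{2 - R_{U \cap D, d}(p,v)}{2 - R_{D,d}(p,v)} = \left(\frac{I^1_{U \cap D, d}}{I^1_{D,d}}\right)^{\!2}\cdot\frac{I^0_{D,d}}{I^0_{U \cap D, d}}\cdot\frac{I^2_{D,d}}{I^2_{U \cap D, d}} \to 1.
\]
For $g_{D,d}$ and $\beta_{D,d}$, the identities \eqref{g-min-int} and \eqref{be-min-int} express these as products involving a power of $I^0_{D,d}(p,v)$ and the inverse of $\lambda_{D,d}(p) = \prod_{k=1}^n \lambda^k_{D,d}(p)$. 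Since the lemma applies to each $\lambda^k_{D,d}$ separately, each of the $n$ factors in $\lambda_{U \cap D,d}/\lambda_{D,d}$ tends to $1$, and hence so does the finite product. Finally, the Ricci ratio is handled by
\[
\frac{(n+1) - \Ric_{U \cap D, d}(p,v)}{(n+1) - \Ric_{D,d}(p,v)} = \frac{I^1_{U\cap D,d}}{I^1_{D,d}}\cdot\frac{I^0_{D,d}}{I^0_{U\cap D,d}}\cdot\frac{M_{D,d}}{M_{U\cap D,d}}\cdot\prod_{k=1}^n\frac{\lambda^k_{U\cap D,d}}{\lambda^k_{D,d}},
\]
coming from \eqref{ric-min-int-2}, and once more every factor tends to $1$.

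Uniformity on $\{\|v\| = 1\}$ is inherited from the lemma: the quantities $I^0_{D,d}$ and $\lambda^k_{D,d}$ do not depend on $v$ at all, while the convergence for $I^1_{D,d}$, $I^2_{D,d}$, and $M_{D,d}$ is uniform in unit $v$ by Lemma~\ref{loc-min-int-peak}; only finitely many such factors are multiplied together in each of the expressions above, so the product remains uniformly controlled. There is no substantive obstacle left to overcome: the entire analytic content of the theorem has been packaged into the $\bar\partial$-estimate with twisted weight $\phi_p$ used to prove Lemma~\ref{loc-min-int-peak}, and what remains is just the bookkeeping that translates it through the identities derived in Section~\ref{localisation}.
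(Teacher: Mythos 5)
Your proposal is correct and follows exactly the route the paper takes: the theorem is stated as an immediate consequence of Lemma~\ref{loc-min-int-peak} in view of the identities \eqref{metric-min-int}, \eqref{g-min-int}, \eqref{be-min-int}, and \eqref{ric-min-int-2}, and you have simply made the routine bookkeeping explicit that the paper leaves implicit. The algebra you carry out (the factorizations of each ratio into finitely many $\mathcal{I}_{U\cap D,d}/\mathcal{I}_{D,d}$ terms, the observation that $I^0$ and the $\lambda^k$ are $v$-independent, and that uniformity in unit $v$ passes through finite products) is accurate and is indeed all that remains once the Lemma is in hand.
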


\section{A Ramadanov type theorem}
In this section, we prove a stability result for the weighted kernel $K_{D, d}$. This is an analog of a result that were recently obtained for the Bergman kernel in \cites{BBMV1}. Let $D$ be a domain in $\mf{C}^n$. For $q \in \mf{C}^n$, denote by $D-q$ the image of $D$ under the translation $Tv=v-q$ and for $r>0$, by $rD$ the image of $D$ under the homothety $Rv=rv$.
\begin{prop}\label{RT}
Let $D_j$ be a sequence of domains in $\mf{C}^n$ converging to a domain $D$ in $\mf{C}^n$ in the following way:
\begin{enumerate}
\item[{\em (i)}] any compact subset of $D$ is eventually contained in each $D_j$,

\item[{\em (ii)}] there exists a common interior point $q$ of $D$ and all $D_j$, such that for every $\ep>0$ there exists $j_{\ep}$ satisfying
\[
D_{j}-q \subset (1+\ep)(D-q),\]
for all $j \geq j_{\ep}$.
\end{enumerate}
Assume further that $D$ is star-convex with respect to $q$, and both $K_D, K_{D,d}$ are nonvanishing along the diagonal. Then $K_{D_j,d} \to K_{D,d}$ uniformly on compact subsets of $D \times D$ together with all the derivatives.
\end{prop}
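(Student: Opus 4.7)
The plan is to establish pointwise convergence $K_{D_j, d}(z_0) \to K_{D, d}(z_0)$ for each $z_0 \in D$ first, and then to upgrade to locally uniform convergence of $K_{D_j, d}(z, w)$ on $D \times D$ together with all derivatives via a normal-family / polarization argument. A preliminary step is to invoke the Ramadanov-type theorem for the unweighted Bergman kernel from \cite{BBMV1}, whose hypotheses match those of the present proposition, to conclude that $K_{D_j} \to K_D$ locally uniformly on $D$. This in particular yields locally uniform convergence of the weights $K_{D_j}^{-d} \to K_D^{-d}$, which is essential because the $L^2$ inner products defining $A^2_d(D_j)$ depend on $j$ through these weights.

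For the upper bound $\limsup_j K_{D_j, d}(z_0) \le K_{D, d}(z_0)$, I would work with the extremals $f_j = K_{D_j, d}(\cdot, z_0)/K_{D_j, d}(z_0) \in A^2_d(D_j)$ realising $I^0_{D_j, d}(z_0)$. Their norms $\|f_j\|_{A^2_d(D_j)}^2 = 1/K_{D_j, d}(z_0)$ are uniformly bounded, since the constant function $1$ is always a competitor for the reciprocal inequality and the $D_j$ sit inside a fixed bounded dilate of $D$ by hypothesis (ii). Combined with a sub-mean-value estimate on balls $B \subset\subset D$ (eventually contained in $D_j$) and the uniform bound on $K_{D_j}^d$ from the preliminary step, this produces a uniform local $L^\infty$-bound on $\{f_j\}$, hence a normal family. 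Extracting a subsequential holomorphic limit $f_\infty$ on $D$, applying Fatou's lemma on an exhausting sequence of compacts, and using $K_{D_j}^{-d} \to K_D^{-d}$ uniformly on each compact, one obtains
\[
\|f_\infty\|_{A^2_d(D)}^2 \le \liminf_j \frac{1}{K_{D_j, d}(z_0)},
\]
while $f_\infty(z_0) = 1$; the extremal characterization $K_{D, d}(z_0) = 1/I^0_{D, d}(z_0)$ then yields the upper bound, and since every subsequence admits such a limit we get the full-sequence statement.

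For the reverse bound, fix $\epsilon > 0$, let $f \in A^2_d(D)$ realise $I^0_{D, d}(z_0)$, and exploit the star-convexity by setting $f_\epsilon(w) = f(T_\epsilon^{-1}(w))$ where $T_\epsilon(u) = q + (1 + \epsilon)(u - q)$. Then $f_\epsilon$ is holomorphic on $D_\epsilon := T_\epsilon(D)$, and by (ii) $D_j \subset D_\epsilon$ for $j \ge j_\epsilon$, so $f_\epsilon$ restricts to an admissible candidate for $I^0_{D_j, d}(z_0)$ after normalization. The change of variables $u = T_\epsilon^{-1}(w)$, combined with the domain monotonicity $K_{D_j} \ge K_{D_\epsilon}$ on $D_j$ and the scaling identity $K_{D_\epsilon}(T_\epsilon(u)) = (1 + \epsilon)^{-2n} K_D(u)$ for $u \in D$, yields
\[
\|f_\epsilon\|_{A^2_d(D_j)}^2 \le (1 + \epsilon)^{2n(d+1)} \|f\|_{A^2_d(D)}^2 = \frac{(1 + \epsilon)^{2n(d+1)}}{K_{D, d}(z_0)}.
\]
Since $f_\epsilon(z_0) = f(T_\epsilon^{-1}(z_0)) \to f(z_0) = 1$ as $\epsilon \to 0$ (by continuity of $f$ and the fact that $T_\epsilon^{-1}(z_0)$ lies on the segment from $q$ to $z_0$ inside $D$), normalizing $\tilde f_j = f_\epsilon/f_\epsilon(z_0)$, using it as a test function in $I^0_{D_j, d}(z_0) = 1/K_{D_j, d}(z_0)$, and letting $j \to \infty$ and then $\epsilon \to 0$ produces the lower bound.

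Having obtained pointwise convergence on the diagonal, I would upgrade to joint convergence via the estimate $|K_{D_j, d}(z, w)|^2 \le K_{D_j, d}(z) K_{D_j, d}(w)$, which together with the local upper bound on $K_{D_j, d}$ makes $\{K_{D_j, d}\}$ a normal family of sesquiholomorphic functions on $D \times D$. Any cluster limit is holomorphic in $z$, antiholomorphic in $w$, and coincides with $K_{D, d}$ on the diagonal; a standard polarization argument (expand in Taylor series in $(z - z_0, \overline{w - z_0})$ about a base point and match coefficients) forces the limit to equal $K_{D, d}$, so the full sequence converges uniformly on compacts, and convergence of all derivatives follows by Cauchy estimates. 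The main obstacle throughout is the simultaneous handling of the varying weights $K_{D_j}^{-d}$ and the varying domains of integration; this is precisely why the unweighted Ramadanov step is indispensable, and why the star-convexity hypothesis is needed to produce a uniform rescaling of $A^2_d(D)$-functions into $A^2_d(D_j)$ in the lower-bound step.
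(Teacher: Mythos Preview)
Your proposal is correct and follows essentially the same route as the paper: normal-family extraction, Fatou's lemma for one inequality, the star-convexity rescaling trick for the other, and reliance on the unweighted case to control the varying weights $K_{D_j}^{-d}$. The only organizational difference is that the paper identifies the off-diagonal limit directly---by recognizing the subsequential limit $K_\infty(\cdot,p)/K_\infty(p)$ as the \emph{unique} minimizer for $I^0_{D,d}(p)$, which immediately gives $K_\infty(z,p)=K_{D,d}(z,p)$---whereas you first establish diagonal convergence and then invoke polarization; both are valid.
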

\begin{proof}
Without loss of generality we may assume that $q=0$. Let $\Om$ be a relatively compact subdomain of $D$. Then $\Om \subset D_j$ for all large $j$. So
\begin{equation}\label{K-j-diag-bound}
K_{D_j,d}(z) \leq K_{\Om}(z)
\end{equation}
for $z \in \Om$ and $j$ large. Also, note that
\begin{equation}\label{K-j-bound}
\big\vert K_{D_j,d}(z,w)\big\vert \leq \sqrt{K_{D_j,d}(z)} \sqrt{K_{D_j,d}(w)}
\end{equation}
for $z, w \in D_j$ and for each $j$. It follows from \eqref{K-j-diag-bound} and \eqref{K-j-bound} that $K_{D_j,d}(z,w)$ is locally uniformly bounded and hence has a subsequence that converges locally uniformly to a function, say, $K_{\infty,d}(z,w)$ on $D \times D$. Note further that by (ii) in the hypothesis of the proposition, we have for all sufficiently large $j$, $D_j \subset 2D$ so that
\[
K_{D_j,d}(p) \geq K_{2D,d}(p)>0,
\]
where the last inequality follows from the fact that $2D$ is biholomorphic to $D$, and $K_{D,d}$ is given to be nonvanishing along the diagonal, and consequently,
\[
K_{\infty,d}(p) \geq K_{2D,d}(p)>0.
\] 

\medskip

Next we show that $K_{\infty,d}(z,w)=K_{D,d}(z,w)$ using the following minimizing property from Section~3: for $p \in D$, the function $K_{D,d}(\cdot, p)/K_{D,d}(p)$ realises the minimum integral $I^0_{D,d}(p,v)$, i.e., it uniquely solves the extremal problem
\[
\text{minimize}\, \|f\|^2_{A^2_d(D)} \text{ subject to } f \in A^2_d(D), f(p)=1.
\]
So fix a point $p\in D$ and let $f\in A^2_d(D)$ with $f(p)=1$. To derive a relation between $K_{\infty}$ and $f$, observe that by (ii) in the hypothesis of the proposition, after passing to a subsequence if necessary, we have for each $j$, $D_j \subset (1+j^{-1})D$ and so by monotonicity,
\begin{equation}\label{KD_j-KD_1+1/j}
K_{(1+j^{-1})D,d}(p) \leq K_{D_j,d}(p).
\end{equation}
Now define the function $g_j$ on $(1+j^{-1})D$ by
\begin{equation}\label{g_j-defn}
g_j(w)=\frac{f\left(\frac{w}{1+j^{-1}}\right)}{f\left(\frac{p}{1+j^{-1}}\right)}
\end{equation}
for $w \in (1+j^{-1})D$. Note that if $j$ is sufficiently large then $f\left(\frac{p}{1+j^{-1}}\right)\neq 0$ by the continuity of $f$ and so $g_j$ is well-defined. Moreover, $g_j(p)=1$ and so
\begin{multline}\label{KD_1+1/j-f}
\frac{1}{K_{(1+j^{-1})D,d}(p)}  \leq \int_{(1+j^{-1})D} \big\vert g_j(w) \big\vert^2\, K_{(1+j^{-1})D}^{-d}(w) \,dV(w)\\
=\frac{(1+j^{-1})^{2n(d+1)}}{\left\vert f\left(\frac{p}{1+j^{-1}}\right)\right\vert^2}  \int_{D} \big\vert f(z)\big\vert^2 \,K_D^{-d}(z)\, dV(z),
\end{multline}
by substituting $w=(1+j^{-1})z$, and using $dV(w)=(1+j^{-1})^{2n}dV(z)$ and the transformation rule
\[
K_D(z) = (1+j^{-1})^{2n} K_{(1+j^{-1})D}((1+j^{-1})z).
\]
By \eqref{KD_j-KD_1+1/j} and \eqref{KD_1+1/j-f}, we have
\begin{multline}\label{KD_j-f}
\frac{1}{K_{\infty}(p)} = \liminf_{j \to \infty} \frac{1}{K_{D_j,d}(p)} \leq \liminf_{j \to \infty} \frac{1}{K_{(1+j^{-1})D,d}(p)}\\ =  \int_{D} \big\vert f(z)\big\vert^2 \,K_D^{-d}(z)\, dV(z).
\end{multline}
We now consider two cases:

Case I. $d=0$: By Fatou's lemma
\begin{multline}\label{int-K-ub}
\int_\Om  \left\vert \frac{K_{\infty}(z,p)}{K_{\infty}(p)} \right\vert^2 \, dV(z)
\leq \liminf_{j \to \infty} \int_{\Om} \left\vert \frac{K_{D_j}(z,p)}{K_{D_j}(p)}\right\vert^2 \, dV(z)\\ 
\leq \liminf_{j \to \infty} \int_{D_j} \left\vert \frac{K_{D_j}(z,p)}{K_{D_j}(p)}\right\vert^2 \, dV(z).
\end{multline}
Since $K_{D_j}(\cdot, z)$ reproduces functions of $A^2(D_j)$,
\begin{equation}\label{KD_j-1/KD_j}
\int_{D_j} \left\vert \frac{K_{D_j}(z,p)}{K_{D_j}(p)}\right\vert^2 \, dV(z) = \frac{1}{K_{D_j}(p)}.
\end{equation}
Combining \eqref{int-K-ub} and \eqref{KD_j-1/KD_j} with \eqref{KD_j-f}, we obtain
\begin{equation}\label{int-K_j-ub}
\int_\Om  \left\vert \frac{K_{\infty}(z,p)}{K_{\infty}(p)} \right\vert^2 \, dV(z) \leq \frac{1}{K_{\infty}(p)} \leq  \int_{D} \big\vert f(z)\big\vert^2 \, dV(z).
\end{equation}
Since $\Om$ is arbitrary, this implies that
\begin{equation}\label{K-infty-f}
\int_D  \left\vert \frac{K_{\infty}(z,p)}{K_{\infty}(p)} \right\vert^2 \, dV(z)  \leq \frac{1}{K_{\infty}(p)} \leq \int_D \big\vert f(z)\big\vert^2 \, dV(z).
\end{equation}
By the minimising property we must have
\begin{equation}\label{K_infty-K_D-ratio}
\frac{K_{\infty}(z,p)}{K_{\infty}(p)}=\frac{K_{D}(z,p)}{K_{D}(p)}, \quad z \in D,
\end{equation}
and also by taking infimum over $f$ in \eqref{K-infty-f}
\begin{equation}\label{K-infty-K_D}
\int_D  \left\vert \frac{K_{\infty}(z,p)}{K_{\infty}(p)} \right\vert^2 \, dV(z)  \leq \frac{1}{K_{\infty}(p)} \leq \frac{1}{K_D(p)} = \int_D  \left\vert \frac{K_{D}(z,p)}{K_{D}(p)} \right\vert^2 \, dV(z).
\end{equation}
By \eqref{K_infty-K_D-ratio}, all the inequalities in \eqref {K-infty-K_D} are equalities and hence $K_{\infty}(p)=K_{D}(p)$. This in turn implies from \eqref{K_infty-K_D-ratio} that $K_{\infty}(z,p)=K_D(z,p)$ for all $z \in D$. The above argument also shows that any convergent subsequence of $K_{D_j}(z,w)$ has limit $K_{D}(z,w)$ and hence $K_{D_j}(z,w)$ itself converges to $K_{D}(z,w)$. Convergence of derivatives follows from the fact that the functions $K_{D_j}(z,w)$ are harmonic.

\medskip

Case II. $d \geq 1$: Note that by the previous case, the weights $K_{D_j}^{-d}(z) \to K_{D}^{-d}(z)$ locally uniformly on $D$. We now repeat (for clarity) the arguments from Case I. By Fatou's lemma,
\begin{equation}\label{int-K-ub-d}
\begin{aligned}
& \int_\Om  \left\vert \frac{K_{\infty}(z,p)}{K_{\infty}(p)} \right\vert^2  K_{D}^{-d}(z) \, dV(z)\\
\leq & \liminf_{j \to \infty}  \int_{\Om} \left\vert \frac{K_{D_j,d}(z,p)}{K_{D_j,d}(p)}\right\vert^2 K_{D_j}^{-d}(z) \, dV(z)\\
\leq & \liminf_{j \to \infty} \int_{D_j} \left\vert \frac{K_{D_j,d}(z,p)}{K_{D_j,d}(p)}\right\vert^2 K_{D_j}^{-d}(z)\,dV(z).
\end{aligned}
\end{equation}
Since $K_{D_j,d}(\cdot, z)$ reproduces the functions in $A_d^2(D_j)$,
\begin{equation}\label{KD_j-1/KD_j-d}
\int_{D_j} \left\vert \frac{K_{D_j,d}(z,p)}{K_{D_j,d}(p)}\right\vert^2 \, K_{D_j}^{-d}(z)\,dV(z) = \frac{1}{K_{D_j,d}(p)}.
\end{equation}
Combining \eqref{int-K-ub-d} and \eqref{KD_j-1/KD_j-d} with \eqref{KD_j-f}, we obtain
\begin{equation}\label{int-K_j-ub-d}
\int_\Om  \left\vert \frac{K_{\infty}(z,p)}{K_{\infty}(p)} \right\vert^2 \, K_{D}^{-d}(z)\,dV(z) \leq \frac{1}{K_{\infty}(p)} \leq \int_{D} \big\vert f(z)\big\vert^2 \,K_D^{-d}(z)\, dV(z).
\end{equation}
Since $\Om$ is arbitrary, this implies that
\begin{equation}\label{K-infty-f-d}
\int_D  \left\vert \frac{K_{\infty}(z,p)}{K_{\infty}(p)} \right\vert^2 \, K_{D}^{-d}(z) \,dV(z)  \leq \frac{1}{K_{\infty}(p)} \leq \int_D \big\vert f(z)\big\vert^2 \, K_{D}^{-d}(z) \,dV(z).
\end{equation}
By the minimizing property we must have
\begin{equation}\label{K_infty-K_D-ratio-d}
\frac{K_{\infty}(z,p)}{K_{\infty}(p)}=\frac{K_{D,d}(z,p)}{K_{D,d}(p)}, \quad z \in D,
\end{equation}
and also by taking infimum over $f$ in \eqref{K-infty-f-d}
\begin{multline}\label{K-infty-K_D-d}
\int_D  \left\vert \frac{K_{\infty}(z,p)}{K_{\infty}(p)} \right\vert^2 \, K_{D}^{-d}(z)\,dV(z)  \leq \frac{1}{K_{\infty}(p)} \\ \leq \frac{1}{K_{D,d}(p)} = \int_D  \left\vert \frac{K_{D,d}(z,p)}{K_{D,d}(p)} \right\vert^2 \, dV(z).
\end{multline}
By \eqref{K_infty-K_D-ratio-d}, all the inequalities in \eqref {K-infty-K_D-d} are equalities and hence $K_{\infty}(p)=K_{D,d}(p)$. This in turn implies from \eqref{K_infty-K_D-ratio-d} that $K_{\infty}(z,p)=K_{D,d}(z,p)$ for all $z \in D$. The above argument also shows that any convergent subsequence of $K_{D_j,d}(z,w)$ has limit $K_{D,d}(z,w)$ and hence $K_{D_j,d}(z,w)$ itself converges to $K_{D,d}(z,w)$. Convergence of derivatives follows from the fact that the functions $K_{D_j,d}(z,w)$ are harmonic.
\end{proof}

\section{Proof of Theorem~\ref{spscvx}}

\subsection{Change of coordinates}
We fix some notations first. For a $C^1$-smooth real-valued function $\rho$ defined on some open subset $U \subset \mf{C}^n$, we denote by $\nabla \rho$ the gradient vector field of $\rho$, by $\nabla_{z}\rho$ the vector field $(\pa \rho/\pa z_1, \ldots, \pa \rho/\pa z_n)$, and $\nabla_{\ov z} \rho=\ov{\nabla_{z}\rho}$. The identity map of $\mf{C}^n$ will be denoted by $I$. For any linear map $L: \mf{C}^n \to \mf{C}^n$, its matrix will be denoted by $\mf{L}$. The following change of coordinates was introduced by Pinchuk -- see Lemma~2.1 in \cite{Pin}.

\begin{lem}\label{pin}
Let $D\subset \mf{C}^n$ be a $C^2$-smoothly bounded strongly pseudoconvex domain, $p^0 \in \pa D$, and $\rho$ a $C^2$-smooth local defining function for $D$ defined in a neighbourhood $U$ of $p^0$. Assume further that $\nabla_{z} \rho(p^0)=({'}0,1)$ and $(\pa \rho/\pa z_n)(z)\neq 0$ for all $z \in U$. Then, there is a family of biholomorphic maps $h_{\zeta}:\mf{C}^n \to \mf{C}^n$ depending continuously on $\zeta \in U \cap \pa D$ that satisfies
\begin{enumerate}[{\em (i)}]
\item $h_{p^0}=I$.
\item $h_{\zeta}(\zeta)=0$.
\item The local defining function $\rho_{\zeta}=\rho \circ h_{\zeta}^{-1}$ of the domain $D_{\zeta}:=h_{\zeta}(D)$, near the origin, has the form
\[
\rho_{\zeta}(z)=2 \Re \big(z_n + Q_{\zeta}(z) \big) + H_{\zeta}(z) + R_{\z}(z),
\]
where $Q_{\zeta}(z) = \sum_{\mu, \nu=1}^n a_{\mu \nu} (\zeta) z_{\mu} z_{\nu} $, $ H_{\zeta}(z) = \sum_{\mu, \nu=1}^n b_{\mu \ov \nu} (\zeta) z_{\mu} \overline{z}_{\nu} $ with
$ Q_{\zeta}('z, 0) \equiv 0 $, $ H_{\zeta}('z, 0) \equiv |'z|^2 $, and $R_{\z}(z)=o(\vert z\vert^2)$. The functions $a_{\mu \nu} (\zeta)$, $b_{\mu \ov \nu} (\zeta)$, and $R_{\z}(z)$ depend continuously on $\z$.
\item The mapping $h_{\z}$ takes the real normal $n_{\z}=\{z \in \mf{C}^n: z=\z+2t\nabla_{\ov z}\rho(\z), t \in \mf{R}\}$ to $\pa D$ at $\z$ into the real normal $\{z \in \mf{C}^n: {'z}=y_n=0\}$ to $\pa D_{\z}$ at the origin.
\end{enumerate}
\end{lem}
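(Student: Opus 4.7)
The plan is to build $h_\zeta$ as a composition of four elementary biholomorphisms, each depending continuously on $\zeta$ and reducing to the identity at $\zeta = p^0$, which successively match the Taylor expansion of $\rho$ against the prescribed normal form.

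First, translate by $z \mapsto z-\zeta$, placing $\zeta$ at the origin and securing (ii). Second, apply a linear biholomorphism $A_\zeta$ constructed so that $A_\zeta^{-1}$ sends $\overline{\nabla_{z}\rho(\zeta)}$ to a positive real multiple of $e_n$; combined with a suitable diagonal scaling, this arranges that the complex-linear part of $\rho$ in the new coordinates equals $2\Re z_n$, and that the real normal $n_\zeta$ is mapped into the real $x_n$-axis, taking care of (iv). Third, perform a polynomial shear in the $z_n$ variable: let $P_\zeta('z)$ denote the ``pure tangential'' part of the holomorphic quadratic of $\rho$ at the origin (the monomials with no $z_n$ factor), and replace $z_n$ by $z_n + P_\zeta('z)$. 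This absorbs $2\Re P_\zeta$ into $2\Re z_n$, leaving a holomorphic quadratic $Q_\zeta$ every monomial of which contains $z_n$, so that $Q_\zeta('z,0) \equiv 0$; it fixes the $z_n$-axis pointwise, hence preserves (iv). Fourth, apply the block-linear biholomorphism $('z,z_n) \mapsto (M_\zeta \, 'z,\, z_n)$, with $M_\zeta \in GL_{n-1}(\mathbb{C})$ chosen by Cholesky factorization of the Levi-form matrix on the complex tangent space (which is positive definite by strong pseudoconvexity) so that the restriction $H_\zeta('z,0)$ becomes $|'z|^2$. This map does not touch $2\Re z_n$, preserves $Q_\zeta('z,0) \equiv 0$ (since it acts only on the first $n-1$ coordinates), and fixes the real $x_n$-axis. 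The composition gives (iii).

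The main technical point—and the step I expect to require the most care—is verifying that the choices $A_\zeta$ and $M_\zeta$ can be made continuously in $\zeta$ and that each factor reduces to the identity at $\zeta = p^0$. For the first, the hypothesis $\nabla_{z}\rho(p^0) = ('0,1)$ means no rotation is needed at $p^0$, and a Gram--Schmidt-type construction applied to the continuous, nonvanishing vector field $\overline{\nabla_{z}\rho(\zeta)}$ produces a continuous $A_\zeta$ with $A_{p^0} = I$. For the second, continuity of $\rho \in C^2$ makes the Levi-form matrix $B_\zeta$ continuous and positive definite near $p^0$, and a continuous Cholesky factorization reducing to $I_{n-1}$ at $p^0$ is then standard. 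Continuity of the coefficients $a_{\mu\nu}(\zeta)$, $b_{\mu\bar\nu}(\zeta)$, and of the remainder $R_\zeta(z)$ in (iii) follows from a uniform Taylor expansion of $\rho \circ h_\zeta^{-1}$ at $0$, using that the constructed $h_\zeta^{-1}$ is a polynomial in $z$ with coefficients continuous in $\zeta$.
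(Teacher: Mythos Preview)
Your approach is essentially the same as the paper's. The paper writes $h_\zeta = \phi_3^\zeta \circ \phi_2^\zeta \circ \phi_1^\zeta$, where $\phi_1^\zeta$ is an affine map $z \mapsto \mathfrak{P}_\zeta(z-\zeta)$ combining your first two steps (translation and linear normalization of the gradient, with an explicit matrix $\mathfrak{P}_\zeta$ given), $\phi_2^\zeta$ is exactly your polynomial shear absorbing the pure tangential holomorphic quadratic, and $\phi_3^\zeta$ is a linear map in the ${'z}$-variables normalizing the Levi form on $\{z_n=0\}$ to $|{'z}|^2$. The one substantive difference is in the last step: the paper uses a spectral decomposition $\Lambda_\zeta U_\zeta$ (unitary diagonalization followed by scaling), whereas you propose a Cholesky factor $M_\zeta$. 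Your choice actually makes the continuous dependence on $\zeta$ slightly cleaner, since the Cholesky factor of a positive-definite matrix is unique and depends continuously on the matrix, while a continuous choice of unitary diagonalizer needs care when eigenvalues coincide (the paper simply asserts this can be done). On the other hand, the paper's explicit matrix $\mathfrak{P}_\zeta$ is used immediately afterwards to compute $h_\zeta$ and $h_\zeta'$ on the real normal, so keeping the linear step explicit has downstream payoff that your abstract Gram--Schmidt description would have to reproduce.
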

We will require the {\it derivative} of the map $h_{\z}$, and so let us quickly recall its construction. For each $\z \in U \cap \pa D$, the map $h_{\z}$ is defined as the composition $h_{\z}=\phi^{\z}_3 \circ \phi^{\z}_2 \circ \phi^{\z}_1$ where the maps $\phi^{\z}_i$ are described below. Fix $\zeta \in U \cap \pa D$.

\medskip

\begin{itemize}
\item The defining function $\rho$ near $\z$ has the form
\begin{multline}\label{rho-0}
\rho(z)=2\Re\left(\sum_{\mu=1}^n \frac{\pa \rho}{\pa z_{\mu}}(\z)(z_{\mu}-\z_{\mu}) + \frac{1}{2} \sum_{\mu,\nu=1}^n \frac{\pa^2\rho}{\pa z_{\mu} \pa z_{\nu}}(\z) (z_{\mu}-\z_{\mu})(z_{\nu}-\z_{\nu}) \right)\\+\sum_{\mu,\nu=1}^n \frac{\pa^2 \rho}{\pa z_{\mu}\pa \ov z_{\nu}}(\z) (z_{\mu}-\z_{\mu})(\ov z_{\nu}-\ov \z_{\nu}) +o(\vert z-\z\vert^2).
\end{multline}
Define $w=\phi^{\z}_1(z)$ by
\begin{equation}\label{phi1}
\begin{aligned}
w_{\nu} & =\frac{\pa \rho}{\pa \ov z_n}(\zeta)(z_{\nu}-\zeta_{\nu}) - \frac{\pa \rho}{\pa \ov z_{\nu}}(\zeta)(z_n-\zeta_n), \quad \nu=1, \ldots, n-1,\\
w_n & =\sum_{\nu=1}^{n} \frac{\pa \rho}{\pa z_\nu}(\z)(z_{\nu}-\zeta_{\nu}),
\end{aligned}
\end{equation}
i.e., $w=P_{\z}(z-\z)$ where $P_{\z}$ is the linear map whose matrix is
\begin{align}\label{P-matrix}
\mf{P}_{\z}=\begin{pmatrix}
\frac{\pa \rho}{\pa \ov z_n}(\z) & 0 & \cdots & 0 & -\frac{\pa \rho}{\pa \ov z_1}(\z)\\
0 & \frac{\pa \rho}{\pa \ov z_n}(\z) & \cdots & 0 & -\frac{\pa \rho}{\pa \ov z_2}(\z)\\
\vdots & \vdots & \cdots &\vdots & \vdots\\
0 & 0 & \cdots & \frac{\pa \rho}{\pa \ov z_n}(\z) & -\frac{\pa \rho}{\pa z_{n-1}}(\z)\\
\frac{\pa \rho}{\pa z_1}(\z) & \frac{\pa \rho}{\pa z_2}(\z) & \cdots & \frac{\pa \rho}{\pa z_{n-1}}(\z) & \frac{\pa \rho}{\pa z_n}(\z)
\end{pmatrix},
\end{align}
and let $D^{\z}_1=\phi^{\z}_1 (D)$. Observe that $\phi_1^{\z}(\z)=0$ and
\begin{equation}\label{phi1-n_z}
\phi_1^{\z} \big(\z+t\nabla_{\ov z}\rho(\z)\big) =tP_{\z} \nabla_{\ov z} \rho(\z)=\big({'0}, t\vert \nabla_{\ov z}\rho(\z)\vert^2\big).
\end{equation}
Thus $\phi_1^{\z}$ maps $n_\z$ to the $\Re z_n$-axis. The latter is the real normal to $\pa D^{\z}_1$ at $0$ which can be seen from the Taylor series expansion of the defining function $\rho^{\z}_1=\rho \circ (\phi_1^{\z})^{-1}$ for $\pa D^{\z}_1$ at $0$. Indeed, by writing \eqref{rho-0} in terms of $w$ coordinates, and then replacing $w$ by $z$ itself, $\rho^{\z}$ has the form
\begin{equation}\label{df-1}
\rho_1^{\z}(z)=2\Re \Big(z_n+Q_1^{\z}(z)\Big) +H_1^{\zeta}(z) + \al_1^{\zeta}(z)
\end{equation}
near the origin, where
\begin{equation}\label{a1-b1}
\begin{aligned}
Q_1^{\z}(z) &=\sum_{\mu,\nu=1}^{n} a^1_{\mu \nu}(\zeta) z_{\mu} z_{\nu}, \quad \begin{pmatrix}a^1_{\mu \nu}(\z)\end{pmatrix}=\frac{1}{2}(\mf{P}_{\z}^{-1})^t \begin{pmatrix} \frac{\pa^2\rho}{\pa z_{\mu} \pa z_{\nu}}(\z) \end{pmatrix}\mf{P}_{\z}^{-1}\\
H_1^{\z}(z) & =\sum_{\mu,\nu=1}^n b^1_{\mu \ov \nu}(\z) z_{\mu} \ov z_{\nu}, \quad 
\begin{pmatrix}b^1_{\mu\ov \nu}(\z)\end{pmatrix}=(\mf{P}_{\z}^{-1})^{*}\begin{pmatrix} \frac{\pa^2 \rho}{\pa z_{\mu}\pa \ov z_{\nu}}(\z) \end{pmatrix}\mf{P}_{\z}^{-1},
\end{aligned}
\end{equation}
and $\al_1^{\z}(z) =o(\vert z\vert^2)$. It follows from \eqref{df-1} that the real normal to $\pa D^{\z}_1$ at $0$ is $\Re z_n$-axis. It is evident from \eqref{phi1} that as $\z \to \z^0$, $\phi_1^{\z}(z) \to \phi_1^{\z^0}(z)$ uniformly on compact subsets of $\mf{C}^n$. Moreover, $(\phi_1^{\z})'(z)=\mf{P}_{\z}$ for all $z \in \mf{C}^n$. Therefore, as $\z \to \z^0$, we also have $(\phi^{\z}_1)'(z)\to (\phi^{\z^0}_1)'(z)$ in the operator norm and uniformly in $z \in \mf{C}^n$.

\medskip 

\item The transformation $w=\phi^{\z}_2(z)$ is a polynomial automorphism defined by
\begin{equation}\label{phi2}
w=\Big({'z}, z_n+ \sum_{\mu,\nu=1}^{n-1} a^1_{\mu \nu}(\zeta) z_{\mu} z_{\nu}\Big).
\end{equation}
In particular, note that $\phi_2^{\z}$ fixes the points on the $\Re z_n$-axis.
In these new coordinates $w$, which we denote by $z$ itself, the defining function $\rho^{\z}_2=\rho\circ (\phi_1^\z)^{-1}\circ (\phi_2^{\z})^{-1}$ of the domain $\phi_2^{\z}\circ \phi_1^{\z}(D)$, near the origin, has the form
\begin{equation}\label{fm-2}
\rho_2^{\z}(z)=2\Re \Big(z_n+Q_2^{\z}(z)\Big) + H_2^{\zeta}(z) + \al_2^{\zeta}(z),
\end{equation}
where $Q_2^{\z}(z) =\sum_{\mu,\nu=1}^{n} a^2_{\mu \nu}(\zeta) z_{\mu} z_{\nu}$ and $H_2^{\z}(z) =\sum_{\mu,\nu=1}^n b^2_{\mu \ov \nu}(\z) z_{\mu} \ov z_{\nu}$ with
\begin{align*}
& a^2_{\mu \nu}(\z)=0 \text{ for } 1 \leq \mu,\nu \leq n-1 \text{ and } a^1_{\mu \nu}(\z)\text{ otherwise.}\\
&  b^2_{\mu\ov \nu}(\z)=b^1_{\mu \ov \nu}(\z) \text{ for } 1 \leq \mu,\nu \leq n.
\end{align*}
Equation \eqref{a1-b1} implies that $a_{\mu\nu}^1(\z)$ depends continuously on $\z$ and hence it follows from \eqref{phi2} that as $\z \to \z^0$, we have $\phi_2^{\z}(z) \to \phi_2^{\z^0}(z)$ uniformly on compact subsets of $\mf{C}^n$. Also,
\begin{equation}\label{der-phi2}
(\phi_2^{\z})'(z)= \begin{pmatrix}\mbb{I}_{n-1} & 0\\
\begin{pmatrix}\displaystyle \sum_{\mu=1}^{n-1} a^1_{\mu \ga}(\z) z_{\mu} + \displaystyle \sum_{\nu=1}^{n-1} a^1_{\ga\nu}(\z) z_{\nu}\end{pmatrix}_{\ga=1, \ldots, n-1}& 1 \end{pmatrix}.
\end{equation}
Thus, as $\z \to \z_0$, we also have $(\phi_2^{\z})'(z) \to (\phi_2^{\z^0})'(z)$ in norm and uniformly on compact subset of $\mf{C}^n$.

\medskip

\item In the current coordinates, $\pa D$ is strongly pseudoconvex at $\zeta=0$ and the complex tangent space to $\pa D$ at $\z$ is $H_\zeta(\pa D)=\{z_n=0\}$. Therefore, the Hermitian form form $H^{\z}_2({'z},0)$ is strictly positive definite. Let $\la_1^{\z}, \ldots, \la^{\z}_{n-1}$ be its eigenvalues. Thus, we can choose a linear change of coordinates of the form $w=\phi^{\z}_3(z)=\La_{\z}U_{\z}z$, where $U_{\z}$ is a unitary rotation which keeps the last coordinate unchanged and $\La_{\z}=\diag(\sqrt{\la_1^{\z}}, \ldots, \sqrt{\la_{n-1}^{\z}}, 1)$, so that in these coordinates, again denoted by $z$ itself, the defining function $\rho^{\z}=\rho\circ (\phi_1^\z)^{-1}\circ (\phi_2^{\z})^{-1} \circ (\phi_3^{\z})^{-1}$ of the domain $\phi_3^{\z}\circ\phi_2^{\z}\circ \phi_1^{\z}(D)$, near the origin, has the form (ii) in the statement of the lemma. By definition, note that $\phi^{\z}_3$ fixes the points on the $\Re z_n$-axis. The linear map $\phi_3^{\z}$ can clearly be chosen so that it depends continuously on $\z$, and its derivative at any point being itself also depends continuously on $\z$.
 \end{itemize}
 
The map $h_{\z}$ is the composition $h_{\z}=\phi^{\z}_3 \circ \phi^{\z}_2 \circ \phi^{\z}_1$. It is evident from its construction that $h_{p^0}=I$, $h_{\z}(\z)=0$, and $\rho^{\z}=\rho\circ h_{\z}^{-1}$ has the desired form as in (iii). Moreover, since $\phi_1^{\z}$ maps $n_{\z}$ to the $\Re z_n$-axis, and both $\phi^{\z}_2$ and $\phi^{\z}_3$ fix points on the $\Re z_n$-axis, it follows that $h_{\z}$ maps $n_{\z}$ to the $\Re z_n$-axis which is the real normal to $\pa D_{\z}$ at the origin. More specifically, we have

\begin{lem}\label{h-h'-n_z}
With notations as in Lemma~\ref{pin}, and if $p \in n_{\z} \cap D$, then
\begin{equation}\label{h-n_z}
h_{\z}(p)= \big(0, -\vert p-\z\vert \vert \nabla_{\ov z} \rho (\z) \vert\big)
\end{equation}
and
\begin{equation}\label{h'-n_z}
h_{\z}'(p)=\La_{\z}\mf{U}_{\z}\mf{P}_{\z}.
\end{equation}
\end{lem}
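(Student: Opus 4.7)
Both formulas in the lemma should follow mechanically by tracing a point $p \in n_\zeta \cap D$ through the factorization $h_\zeta = \phi_3^\zeta \circ \phi_2^\zeta \circ \phi_1^\zeta$ already spelled out in the excerpt, and then applying the chain rule. Everything reduces to reading off information from formulas that have already been written down, so the proof should be short.

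For \eqref{h-n_z} I would parametrize $p = \zeta + t\,\nabla_{\bar z}\rho(\zeta)$ with $t \in \mathbb{R}$. Since $\rho$ is a defining function negative on $D$, $\nabla_{\bar z}\rho(\zeta)$ is an outward vector at $\zeta \in \partial D$, so $p \in D$ forces $t < 0$; moreover $|p-\zeta| = |t|\,|\nabla_{\bar z}\rho(\zeta)|$. By \eqref{phi1-n_z}, $\phi_1^\zeta(p) = ({'0},\, t|\nabla_{\bar z}\rho(\zeta)|^2)$, which using $t<0$ becomes $({'0},\, -|p-\zeta|\,|\nabla_{\bar z}\rho(\zeta)|)$, a point on the $\Re z_n$-axis. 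As emphasized during the construction, both $\phi_2^\zeta$ and $\phi_3^\zeta$ fix every point of this axis, so \eqref{h-n_z} follows at once.

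For \eqref{h'-n_z} I would apply the chain rule at $p$. The affine map $\phi_1^\zeta$ has derivative $\mathfrak P_\zeta$ everywhere, and the linear map $\phi_3^\zeta = \Lambda_\zeta U_\zeta$ has derivative $\Lambda_\zeta \mathfrak U_\zeta$ everywhere. It remains to evaluate $(\phi_2^\zeta)'$ at $\phi_1^\zeta(p)$. By \eqref{der-phi2}, the lower-left block of this Jacobian is a linear combination of the first $n-1$ coordinates of its argument; since $\phi_1^\zeta(p)$ has ${'z} = 0$, that block vanishes and $(\phi_2^\zeta)'(\phi_1^\zeta(p))$ reduces to the identity. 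Composing the three factors then yields $h_\zeta'(p) = \Lambda_\zeta \mathfrak U_\zeta \mathfrak P_\zeta$.

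The lemma is essentially formal, so there is no real obstacle. The only items worth flagging are the sign bookkeeping that pins down $t<0$ in the first step, and the observation that the quadratic correction introduced by $\phi_2^\zeta$ has a Jacobian whose nontrivial block vanishes along the $\Re z_n$-axis. It is precisely this second point that makes the final derivative decouple cleanly into the three linear factors $\Lambda_\zeta$, $\mathfrak U_\zeta$, and $\mathfrak P_\zeta$ and explains why the quadratic terms carried by $\phi_2^\zeta$ do not appear in \eqref{h'-n_z}.
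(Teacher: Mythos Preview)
Your proposal is correct and follows essentially the same route as the paper's proof: write $p$ along the normal direction, apply \eqref{phi1-n_z} to land on the $\Re z_n$-axis where $\phi_2^{\zeta}$ and $\phi_3^{\zeta}$ act trivially, and then use the chain rule together with $(\phi_2^{\zeta})'(\phi_1^{\zeta}(p))=\mathbb{I}$ from \eqref{der-phi2}. The only cosmetic difference is that the paper writes $p=\zeta-|p-\zeta|\,\nabla_{\bar z}\rho(\zeta)/|\nabla_{\bar z}\rho(\zeta)|$ directly rather than introducing the auxiliary parameter $t$, but the content is identical.
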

\begin{proof}
Note that
\[
p=\z-\vert p-\z\vert \frac{\nabla_{\ov z} \rho (\z)}{\vert \nabla_{\ov z} \rho (\z) \vert}.
\]
Therefore, by \eqref{phi1-n_z}
\[
\phi_1^{\z}(p) =\big(0, -\vert p-\z\vert \vert \nabla_{\ov z} \rho (\z) \vert\big).
\]
The first part of the lemma follows from the fact that $\phi_2^{\z}$ and $\phi_3^{\z}$ fix the points on the $\Re z_n$-axis. For the second part we only need to observe that $(\phi_2^{\z})' (\phi_1^{\z}(p))=\mbb{I}$ from \eqref{der-phi2} and that $(\phi_3^{\z})'(z)=\La_{\z}\mf{U}_{\z}$ for any $z$.
\end{proof}

\subsection{Scaling}
We now proceed to prove Theorem~\ref{spscvx}. Let $D \subset \mf{C}^n$ be a $C^2$-smoothly bounded strongly pseudoconvex domain and let $p^0 \in \pa D$. Then there exist local holomorphic coordinates $z_1, \ldots, z_n$ near $p^0$ in which $p^0=0$, a neighbourhood $U$ of $p^0=0$ such that $U \cap D=\{z \in U : \rho(z)<0\}$ where $\rho(z)$ is a smooth function on $U$ of the form
\begin{equation}\label{initial-df}
\rho(z) = 2\Re z_n + \vert {'}z\vert^2 + o(\vert 'z\vert^2, \Im z_n),
\end{equation}
that satisfies $\nabla \rho(z) \neq 0$ for all $z \in U$, and a constant $0<r<1$ such that 
\begin{equation}\label{D-inside-ball}
U\cap D \subset \Om:=\Big\{z \in \mf{C}^n : 2 \Re z_n + r \vert {'z}\vert^2 <0\Big\}.
\end{equation}
Henceforth, we will be working in the above coordinates. 

In view of the localization result Theorem~\ref{localise-peak}, by shrinking $U$ if necessary, it is enough to establish the theorem for $U \cap D$. We relabel $U\cap D$ as $D$ for convenience. Let $ p^j $ be a sequence of points in $D$ converging to $p^0=0$. Without loss of generality, we may assume that $p^j \in U$ for all $j$. Choose $\zeta^j \in \pa D$ closest to $p^j$, which is unique if $j$ is sufficiently large. Note that $\zeta^j \rightarrow p^0 $. Let $\de_j=\de_D(p^j)=\vert p^j-\z^j\vert$. Set $\phi_i^j=\phi^{\z_j}_i$, $h_j=h_{\zeta^j}$, $D_j=D_{\zeta_j}$, and $\rho_j=\rho^{\zeta^j}$. Then by Lemma~\ref{pin}, near $0$,
\begin{equation}\label{rho-j}
\rho_j(z)=2 \Re\big(z_n+Q_j(z)\big)+H_j(z)+R_j(z),
\end{equation}
where $Q_j=Q_{\z^j}$, $H_j=H_{\z^j}$, and $R_j=R_{\z^j}$. Moreover, thanks to the strong pseudoconvexity of $\pa D$ near $p^0=0$, shrinking $U$ and taking a smaller $r$ in  \eqref{D-inside-ball} if necessary, we have
\begin{equation}\label{D_j-inside-ball}
D_j \subset \Om
\end{equation}
for all $j$ large.

\medskip

Let $q^j=h^j(p^j)$ and $ \eta_j =\de_{D^j}(q^j)$. From Lemma~\ref{h-h'-n_z}, we have
\[
q^j=\big({'0},-\de_j \vert \nabla_{\ov z} \rho(\z^j)\vert \big),
\]
and thus $\eta_j=\de_j \vert \nabla_{\ov z} \rho(\z^j)\vert$. Therefore,
\begin{equation}\label{delta-eta}
\frac{\eta_j}{ \de_j } = \vert \nabla_{\ov z} \rho(\z^j)\vert \to \vert \nabla_{\ov z} \rho(p^0)\vert =1
\end{equation}
from \eqref{initial-df}. Also, let $\mf{S}_j=h_j'(p^j)$. 
Note that Lemma~\ref{pin} gives
\begin{equation}\label{der-h^j}
\mf{S}_j\to \mf{I}
\end{equation}
in the operator norm.

Now consider the anisotropic dilation map $ T_j : \mf{C}^n \rightarrow \mf{C}^n $ 
defined by 
\[
 T_j( 'z, z_n) = \left(   \frac{'z}{\sqrt{\eta_j}}, \frac{z_n}{{\eta}_j } \right).
\]
Set
\begin{align*}
\ti D_j & = T_j(D_j),\\
\ti \rho_j(z) &= \frac{1}{\eta_j} \rho_j(\sqrt{\eta_j}\,{'z}, \eta_jz_n).
\end{align*}
Note that any compact subset of $\mf{C}^n$ is contained in the domain of $\ti \rho_j$ for all sufficiently large $j$ and $\ti \rho_j$ is a defining function for $\ti D_j$. By \eqref{rho-j}
\[
\ti \rho_j(z) = 2\Re z_n + \vert 'z\vert^2 + \frac{1}{\eta_j}A_j(\sqrt{\eta_j}{'z},\eta_j z_n),
\]
where
\[
A_j(z)=z_nO(\vert z\vert)+O(\vert z \vert^3).
\]
It follows that $\ti \rho_j$ converges in $C^2$-toplogy on compact subsets of $\mf{C}^n$ to
\[
\rho_{\infty}(z)=2\Re z_n + \vert 'z\vert^2.
\]
This implies that the domains $\ti D_j$ converge in the local Hausdorff sense to the domain  $D_{\infty}$ defined in the statement of Theorem~\ref{spscvx}.  Also note that since $T_j\circ h_j(p^j)=({'0},-1)=b^{*}$, each $\ti D_j$ contains the point $b^{*}$. We now derive a stability result for the Narasimhan-Simha weighted kernel under scaling. First we note the following properties of the Cayley transform $\Phi$ defined in \eqref{cayley} that follow by a routine calculation: $\Phi$ is a biholomorphism of the domain (of $\Phi$)
\[
\mathcal{D}_\Phi:=\mf{C}^n \setminus \{z: z_n=1\}
\]
onto itself with $\Phi^{-1}=\Phi$. The domain $\Om$ defined in \eqref{D-inside-ball} is mapped by $\Phi$ onto the bounded domain
\begin{equation}\label{Phi-Om}
\Phi(\Om)=\Big\{z \in \mf{C}^n: r\vert {'z}\vert^2 +\vert z_n\vert^2 <1\Big\},
\end{equation}
and the domain $D_{\infty}$ is mapped to
\begin{equation}\label{Phi-D-infty}
\Phi(D_{\infty})=\mf{B}^n.
\end{equation}

\begin{lem}\label{conv-der-ker}
For any multiindex $A=(a_1, \ldots, a_n)$,
\[
\pa^{A} K_{\ti D^j,d}(z) \to \pa^{A} K_{D_{\infty},d} (z)
\]
uniformly on compact subsets of $D_{\infty}$.
\end{lem}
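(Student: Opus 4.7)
The plan is to deduce this lemma from the Ramadanov-type stability theorem Proposition~\ref{RT}, applied to the sequence $\tilde D_j$ converging to $D_\infty$, with common interior point $q = b^* = ({'}0, -1)$. Note that $b^* \in D_\infty$ since $\rho_\infty(b^*) = -2 < 0$, and $b^* \in \tilde D_j$ for every $j$ since $T_j \circ h_j(p^j) = b^*$ by construction.

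Several preliminary items are straightforward. Star-convexity of $D_\infty$ about $b^*$ follows from a direct calculation: for $z \in D_\infty$ and $t \in [0,1]$ the point $w = (1-t)b^* + tz = (t\,{'z}, -(1-t) + tz_n)$ satisfies
\[
2\Re w_n + |{'w}|^2 = -2(1-t) + 2t\Re z_n + t^2|{'z}|^2 \le -(1-t)\big(2 + t|{'z}|^2\big) < 0
\]
by using $2\Re z_n < -|{'z}|^2$, so the segment stays in $D_\infty$. Nonvanishing of $K_{D_\infty}(z)$ and $K_{D_\infty, d}(z)$ on the diagonal follows from the biholomorphism $\Phi : D_\infty \to \mathbb{B}^n$ (the Cayley transform in \eqref{cayley}) together with the transformation rule \eqref{tr-1}, since the ball kernels are explicit and positive by Corollary~\ref{NS-ball-comp-z}. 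Hypothesis (i) of Proposition~\ref{RT}, that any compact $K \subset D_\infty$ is eventually contained in $\tilde D_j$, is immediate from the $C^2$-convergence $\tilde \rho_j \to \rho_\infty$ on compact subsets of $\mathbb{C}^n$ established in the text preceding the lemma: $\rho_\infty \le -\delta$ on $K$ forces $\tilde \rho_j \le -\delta/2$ on $K$ for $j$ large.

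The main technical step is hypothesis (ii): for every $\epsilon > 0$, $\tilde D_j - b^* \subset (1+\epsilon)(D_\infty - b^*)$ for all large $j$. A short computation gives
\[
(1+\epsilon)(D_\infty - b^*) + b^* = \big\{z : 2(1+\epsilon)\Re z_n + |{'z}|^2 < 2\epsilon(1+\epsilon)\big\}.
\]
The global containment $\tilde D_j \subset \Omega$, which holds for large $j$ because $T_j$ preserves $\Omega$ and $D_j \subset \Omega$ by strong pseudoconvexity, forces $\Re z_n \le 0$ on $\tilde D_j$, so $2(1+\epsilon)\Re z_n \le 2\Re z_n$ there; it therefore suffices to show $2\Re z_n + |{'z}|^2 < 2\epsilon(1+\epsilon)$ on $\tilde D_j$, equivalently $\rho_\infty(z) < 2\epsilon(1+\epsilon)$. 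Writing $\rho_\infty = \tilde \rho_j - E_j$ with $E_j \to 0$ in $C^2$ on compact sets, and noting that $\tilde \rho_j < 0$ on $\tilde D_j$, the problem reduces to controlling $|E_j|$ uniformly on $\tilde D_j$. This is achieved by combining the local $C^2$-convergence on bounded regions with the a priori inclusion $\tilde D_j \subset \Omega$ and the $O(1/\eta_j)$ bound on the $z_n$-extent of $\tilde D_j$ inherited from the boundedness of the cap $D_j$.

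Once (ii) is in hand, Proposition~\ref{RT} gives uniform convergence of $K_{\tilde D_j, d}$ to $K_{D_\infty, d}$ on compact subsets of $D_\infty \times D_\infty$ together with all derivatives, which yields the lemma after restricting to the diagonal. The main obstacle is the verification of (ii): the bare containment $\tilde D_j \subset \Omega$ alone does not suffice, because $\Omega$ is strictly wider at infinity than $(1+\epsilon)(D_\infty - b^*) + b^*$ for small $\epsilon$; one must use the finer asymptotics of the error term $E_j$ on the unbounded part of $\tilde D_j$, exploiting the explicit form of the quadratic terms $Q_j$ and $H_j$ provided by Lemma~\ref{pin} together with the anisotropic nature of the scaling $T_j$.
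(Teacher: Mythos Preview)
Your plan to apply Proposition~\ref{RT} directly to the sequence $\tilde D_j \to D_\infty$ with base point $q=b^*$ breaks down at hypothesis~(ii), and the breakdown is not a matter of missing details: the inclusion $\tilde D_j - b^* \subset (1+\ep)(D_\infty - b^*)$ is simply false for small $\ep$. Because the remainder in \eqref{initial-df} carries no sign, one can pick a fixed point $w^* \in D$ (here $D=U\cap D$) with $\rho_\infty(w^*)=2\Re w^*_n+|{'}w^*|^2=\de>0$. Since $h_j\to I$, for all large $j$ we have $w^*\in D_j$, and then $z^j:=T_j(w^*)\in\tilde D_j$ satisfies
\[
2(1+\ep)\Re z^j_n+|{'}z^j|^2=\frac{2(1+\ep)\Re w^*_n+|{'}w^*|^2}{\eta_j}=\frac{\de+2\ep\,\Re w^*_n}{\eta_j}.
\]
For $\ep<\de/(2|\Re w^*_n|)$ the numerator is a fixed positive number, so the left side tends to $+\infty$, whereas membership in $(1+\ep)(D_\infty-b^*)+b^*$ requires it to be below $2\ep(1+\ep)$. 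Equivalently, the error $E_j(z^j)=\tilde\rho_j(z^j)-\rho_\infty(z^j)=(\rho_j(w^*)-\rho_\infty(w^*))/\eta_j\to-\infty$, so the control of $|E_j|$ you invoke on the unbounded part of $\tilde D_j$ cannot hold; the form of $\rho_j$ in Lemma~\ref{pin} is only available near the origin and says nothing about the scaled images of points at fixed positive distance from $0$.

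The paper circumvents this by first pushing everything through the Cayley transform $\Phi$ and then applying Proposition~\ref{RT} to $\Phi(\tilde D_j)\to\Phi(D_\infty)=\mf B^n$ with $q=0$. The crucial gain is that $\Phi(\tilde D_j)\subset\Phi(\Om)$ is now \emph{bounded}, so a putative sequence $\xi^j\in\Phi(\tilde D_j)$ with $|\xi^j|\ge 1+\ep$ has a subsequential limit $\xi\in\ov{\Phi(\Om)}$; one checks $\xi_n\neq 1$, so $\Phi^{-1}(\xi^j)$ converges to a \emph{finite} point of $\mf C^n$, where the local $C^2$-convergence $\tilde\rho_j\to\rho_\infty$ legitimately applies and forces $\xi\in\ov{\mf B^n}$, a contradiction. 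The far-away points $z^j$ above are sent by $\Phi$ toward $({'}0,1)\in\pa\mf B^n$ and thus cause no trouble in the bounded picture. In short, the Cayley transform is not a cosmetic convenience here; it is what makes hypothesis~(ii) verifiable.
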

\begin{proof}
By \eqref{D_j-inside-ball}, the domain $\Om$ contains $D_j$ for all sufficiently large $j$. Also, $\Om$ is invariant under the maps $T_j$. Therefore, $\Om$ contains $\ti D_j$ for all sufficiently large $j$. Evidently, $\Om$ contains $D_{\infty}$ (as $r<1$). We claim that $\Phi(\ti D^j)$ converge to $\Phi(D_{\infty})=\mf{B}^n$ in the way as required by the hypothesis of Theorem~\ref{RT} with $q=0$. Indeed, first note that if $K$ is a compact subset of $\Phi(D_{\infty})$, then $\Phi^{-1}(K)$ is a compact subset of $D_{\infty}$. Since $\ti D_j \to D_{\infty}$ in the local Hausdorff sense,  $\Phi^{-1}(K)$ is contained in $\ti D_j$ for all sufficiently large $j$ implying that $K$ is contained in $\Phi(\ti D_j)$ for all sufficiently large $j$. Next, assume, if possible, that the second condition in the hypothesis of Theorem~\ref{RT} is not satisfied with $q=0$. Then there exist an $\ep>0$, a subsequence of $\Phi(\ti D_j)$, which we relabel as $\Phi(\ti D_j)$, and $\xi^j \in \Phi(\ti D_j)$ such that $\xi^j$ lies outside $(1+\ep)\Phi(D_{\infty})$, i.e., $\vert \xi^j\vert \geq 1+ \ep$. Since $\Phi(\ti D_j) \subset \Phi(\Om)$ for all large $j$, the sequence $\{\xi^j\}$ is bounded and hence after passing to a subsequence, $\xi^j \to \xi$ for some $\xi \in \ov{\Phi(\Om)}$. This, first of all, implies that $\vert \xi_n\vert^2 +r \vert {'}\xi\vert^2 \leq 1$ by \eqref{Phi-Om} as well as $\vert \xi \vert \geq 1+\ep$, which together ensures that $\xi_n \neq 1$, i.e., $\xi \in \mathcal{D}_\Phi$, and second, it now also implies that $\Phi^{-1}(\xi_j) \to \Phi^{-1}(\xi)$. Since $\Phi^{-1}(\xi_j) \in \ti D_j$, we have $\ti \rho_j (\Phi^{-1}(\xi_j))<0$ for all large $j$, and hence $\rho_{\infty}(\Phi^{-1}(\xi)) \leq 0$. Therefore, $\Phi^{-1}(\xi) \in \ov D_{\infty}$ and hence $\xi \in \ov {\Phi(D_{\infty})}$, which contradicts the fact that $\vert \xi\vert \geq 1+\ep$. This proves our claim. Therefore, by Theorem~\ref{RT}, $K_{\Phi(\ti D_j),d}(z)$ converges to $K_{\Phi(D_{\infty}),d}(z)$ uniformly on compact subsets of $\Phi(D_{\infty})$, together with all derivatives. By the transformation rule, the lemma follows immediately.
\end{proof}

\subsection{Boundary asymptotics}
We are now ready to complete the proof of Theorem~\ref{spscvx}. Recall that $T_j \circ h_j : D \to \ti D_j$ sends $p^j$ to $b^{*}=('0,-1)$. The matrix of the linear map $T_j$ is $\mf{T}_j=\diag(1/\sqrt{\eta_j}, \ldots, 1/\sqrt{\eta_j}, 1/\eta_j)$ and
\begin{equation}\label{det-T_j}
\det \mf{T}_j=\eta_j^{-(n+1)/2}.
\end{equation}
Also recall that $\mf{S}_j=h_j'(p^j)$.  Thus $(T_j\circ h_j)^{\prime}(p^j)=\mf{T}_j\mf{S}_j$. In the rest of the proof below, we will be repeatedly using Corollary~\ref{NS-ball-comp-z}, \eqref{delta-eta}, \eqref{der-h^j}, Lemma~\ref{conv-der-ker}, and the Cayley transform $\Phi$ defined in \eqref{cayley} that maps $D_{\infty}$ biholomorphically onto $\mf{B}^n$ with $\Phi(b^*)=0$,
\begin{equation*}
\Phi'(b^*)=-\diag (1/\sqrt{2}, \ldots, 1/\sqrt{2}, 1/2), \quad \text{and} \quad
 \det \Phi'(b^{*})=(-1)^n2^{-(n+1)/2},
\end{equation*}
without referring to them.

\medskip

(a) We have
\begin{align*}
K_{D,d}(p^j) = K_{\ti D_j, d}(b^*) \vert \det \mf{T}_j\mf{S}_j\vert^{2d+d} = \eta_j^{-(d+1)(n+1)} K_{\ti D_j, d}(b^{*}) \vert\det \mf{S}_j\vert^{2d+2}.
\end{align*}
Therefore,
\begin{multline*}
\de_j^{(d+1)(n+1)} K_{D, d}(p^j)
= \left(\frac{\de_j}{\eta_j}\right)^{(d+1)(n+1)} K_{\ti D^j, d}(b^{*})  \vert\det S_j\vert^{2d+2} \\
\to K_{D_{\infty},d}\big(b^{*}\big) =  K_{\mf{B}^n,d}(0) \big\vert \det \Phi'(b^*)\big\vert^{2d+2} =c\left(\frac{n!}{2^{n+1}\pi^n}\right)^{d+1}.
\end{multline*}

(b) By \eqref{tr-2}, we have
\[
g_{D,d}(p^j) = g_{\ti D^j, d} \vert\det \mf{T}_j\mf{S}_j\vert^2 =\eta_j^{-(n+1)}g_{\ti D^j,d}(b^*)  \vert \det \mf{S}_j\vert^{2},
\]
and therefore,
\begin{multline*}
\de_j^{n+1} g_{D,d}(p^j) = \left(\frac{\de_j}{\eta_j}\right)^{n+1} g_{\ti D^j,d}(b^*)  \vert \det \mf{S}_j\vert^{2} \to  g_{D_{\infty},d}(b^*)\\ = g_{\mf{B}^n,d}(0) \big\vert \det \Phi'(b^*)\big\vert^2 =\frac{1}{2^{n+1}} (d+1)^n (n+1)^n.
\end{multline*}

(c) From (a) and (b),
\begin{multline*}
\be_{D,d}(p^j) = \be_{\ti D_j,d}(b^*) = \frac{g_{\ti D_j,d}(b^*)}{K_{\ti D_j,d}(b^*)^{1/(d+1)}} \to \frac{g_{D_{\infty},d}(b^*)}{K_{D_{\infty},d}\big(b^{*}\big)^{1/(d+1)}}\\
 = \be_{D_{\infty},d}(b^*) =\be_{\mbb{B}^n,d}(0) =(d+1)^{n}(n+1)^{n} \left(\frac{1}{c}\right)^{1/(d+1)} \frac{\pi^n}{n!}.
\end{multline*}

(d) By invariance of the metric
\begin{equation}\label{tau-v}
\tau_{D,d}(p^j,v)= \tau_{\ti D_j,d}\big(b^{*},\mf{T}_j \mf{S}_j v)
= \tau_{\ti D_j,d}\left(b^{*},\left(\frac{'(\mf{S}_jv)}{\sqrt{\eta_j}}, \frac{(\mf{S}_jv)_n}{\eta_j}\right) \right).
\end{equation}
Therefore,
\begin{multline*}
\de_j\tau_{D,d}(p^j,v) =\frac{\de_j}{\eta_j} \tau_{\ti D_j,d}\Big(b^{*},\big(\sqrt{\eta_j}\,{'(\mf{S}_jv)}, (\mf{S}_jv)_n\big) \Big)  \to \tau_{D_{\infty},d}\big(b^{*}, ('0, v_n)\big)\\
= \tau_{\mbb{B}^n,d}\big(0, \Phi'(b^*) ('0,v_n) \big)=\tau_{\mbb{B}^n,d} \big(0, ('0,-v_n/2)\big)=\frac{1}{2}\sqrt{(d+1)(n+1)}\big\vert v_N(p^0)\big\vert,
\end{multline*}
as $v_N(p^0)=({'}0,v_n)$ by \eqref{initial-df}.

(e) For brevity, write $v_H^j=v_H(p^j)$ and $v_H^0=v_H(p^0)=({'v},0)$ by \eqref{initial-df}. We claim that $\big(\mf{S}_j v_H^j\big)_n=0$. Indeed, note that from Lemma~\ref{h-h'-n_z},
\[
\mf{S}_jv_H^j = {\La}_{\z^j}\mf{U}_{\z^j}\mf{P}_{\z^j} v_H^j.
\]
From \eqref{phi1-n_z},
\begin{align*}
\mf{P}_{\z^j} v_H^j & = \mf{P}_{\z^j}\left(v-\left\langle v, \frac{\nabla_{\ov z} \rho(\z^j)}{\vert \nabla_{\ov z} \rho(\z^j)\vert}\right\rangle \frac{\nabla_{\ov z} \rho(\z^j)}{\vert \nabla_{\ov z} \rho(\z^j)\vert}\right)\\
&=\mf{P}_{\z^j}v-\big\langle v, \nabla_{\ov z} \rho(\z^j)\big\rangle \frac{1}{\vert \nabla_{\ov z} \rho(\z^j)\vert^2}\mf{P}_{\z^j}\nabla_{\ov z} \rho(\z^j)\\
& = \Big({'\mf{P}_{\z^j}v}, \big\langle v, \nabla_{\ov z} \rho(\z^j)\big\rangle\Big) - \Big(0, \big\langle v, \nabla_{\ov z} \rho(\z^j)\big\rangle\Big) =({'\mf{P}_{\z^j}v}, 0).
\end{align*}
Also, $\La_{\z^j}$ and $\mf{U}_{\z^j}$ do not effect the $z_n$-coordinate. Hence, it follows that $\mf{S}_jv_H^j=({'W^j},0)$ for some ${'W^j} \in \mbb{C}^{n-1}$ and the claim follows. Now from \eqref{tau-v},
\[
\tau_{D,d}\big(p^j,v_H^j\big) =\tau_{\ti D_j,d}\left(b^{*},\left(\frac{'(\mf{S}_jv_H^j)}{\sqrt{\eta_j}}, 0\right) \right),
\]
and hence
\begin{align*}
\sqrt{\de_j}\tau_{D,d}\big(p^j,v_H^j\big)
&=\sqrt{\frac{\de_j}{\eta_j}} \tau_{\ti D_j,d}\Big(b^*,\big({'(\mf{S}_jv_H^j)},0\big) \Big) \to \tau_{D_{\infty},d}\big(b^{*}, ({'}v,0)\big)\\
&= \tau_{\mbb{B}^n,d}\big(0, \Phi'(b^*)({'}v,0)\big) =  \tau_{\mbb{B}^n,d}\big(0, ({-'}v/\sqrt{2},0)\big)\\
& =\frac{1}{\sqrt{2}}\sqrt{(d+1)(n+1)} \big\vert {'}v\big\vert\\
& =\sqrt{\frac{1}{2}(d+1)(n+1)\mathcal{L}_\rho(p^0, v_H^0)},
\end{align*}
as $\vert {'}v\vert^2 = \vert {'}v_H^0\vert^2=\mathcal{L}_\rho(p^0, v_H^0)$ by \eqref{initial-df}.

\medskip

(f) Let $\hat{v}= \lim_{j \to \infty} (\mf{T}_j\mf{S}_jv)/\vert \mf{T}_j\mf{S}_jv\vert$. Then
\begin{multline*}
R_{D,d}(p^j,v) =R_{\ti D_j,d}(b^*, \mf{T}_j\mf{S}_jv) = R_{\ti D_j,d}\left(b^*, \frac{\mf{T}_j\mf{S}_jv}{\vert \mf{T}_j\mf{S}_jv\vert}\right) \\ \to R_{D_\infty,d}\big(b^*, \hat{v}\big)
=R_{\mf{B}^n,d}\big(0, \Phi'(b^*)\hat{v}\big) =-\frac{2}{(d+1)(n+1)}.
\end{multline*}

(g) Proceeding as in (f), we obtain
\[
\Ric_{D,d}(p^j,v) \to \Ric_{\mf{B}^n,d}\big(0, \Phi'(b^*)\hat{v}\big) = -\frac{1}{d+1}.
\]
This completes the proof of the Theorem~\ref{spscvx}.
\section{Proof of Theorem 1.2}

\noindent First, note that for a bounded domain $D \subset \mf C^n$, it is known that (see for example \cite{JP}) the Bergman metric always dominates the Carath\'{e}odory metric. By using the relationship between $K_{D, d}, \tau_{D, d}$ with the minimum integrals as discussed in Section 3.1, the same proof applies to show that $ds^2_{D, d}$ also dominates the Carath\'{e}odory metric on $D$. In particular, if $D$ is strongly pseudoconvex, it is known that the Carath\'{e}odory metric is complete, and hence the Narasimhan--Simha metric $ds^2_{D, d}$ must also be complete for every $d \ge 0$.

\section{Remarks and questions}

\noindent (i) It is well known that the paradigm of the scaling principle applies to more general domains. It is therefore possible to formulate analogs of Theorem~\ref{spscvx} for such domains, but to do so, it is essential to understand the weighted kernels on unbounded model domains. As an example, consider the unbounded domain
\[
D_{\infty} = \big\{ (z_1, z_2) \in \mf C^2: 2 \Re z_2 + P(z_1, \ov z_1) < 0 \big\}
\]
where $P = P(z_1, \ov z_1)$ is a subharmonic polynomial without harmonic terms. Such a domain arises as the limiting model domain associated with a smooth weakly pseudoconvex finite type domain in $\mf C^2$. What can be said about $K_{D_{\infty}, d}$ when $d \geq1$? In particular, is it true that $K_{D_{\infty}, d}$ has a positive lower bound everywhere? Knowing this would be helpful in controlling the weighted kernel of the scaled domains and would lead to conclusions similar to those listed in Theorem 1.1 for smooth weakly pseudoconvex finite type domains. Are there analogs of Catlin's results (\cite{C}) for $K_{D, d}$?

\medskip

\noindent (ii) It is natural to formulate a Lu Qi-Keng type conjecture for $K_{D, d}(z, w)$, $d \geq 1$. Does that also fail generically? -- see \cite{Boas-zeros} for the case $d=0$.

\medskip

\noindent (iii) Are there pseudoconvex domains for which the weighted Bergman spaces $A^2_d(D)$, $d \geq 1$, are non trivial and finite dimensional? -- see \cite{Wie} for the case $d=0$.


\begin{bibdiv}
\begin{biblist}

\bib{BBMV1}{article}{
   author={Balakumar, G. P.},
   author={Borah, Diganta},
   author={Mahajan, Prachi},
   author={Verma, Kaushal},
   title={Remarks on the higher dimensional Suita conjecture},
   journal={Proc. Amer. Math. Soc.},
   volume={147},
   date={2019},
   number={8},
   pages={3401--3411},
   issn={0002-9939},
   review={\MR{3981118}},
   doi={10.1090/proc/14421},
}

\bib{Berg70}{book}{
   author={Bergman, Stefan},
   title={The kernel function and conformal mapping},
   edition={Second, revised edition},
   note={Mathematical Surveys, No. V},
   publisher={American Mathematical Society, Providence, R.I.},
   date={1970},
   pages={x+257},
   review={\MR{0507701}},
}

\bib{Ber-Pau}{article}{
   author={Berndtsson, Bo},
   author={P\u{a}un, Mihai},
   title={Bergman kernels and the pseudoeffectivity of relative canonical
   bundles},
   journal={Duke Math. J.},
   volume={145},
   date={2008},
   number={2},
   pages={341--378},
   issn={0012-7094},
   review={\MR{2449950}},
   doi={10.1215/00127094-2008-054},
}

\bib{Boas-zeros}{article}{
   author={Boas, Harold P.},
   title={The Lu Qi-Keng conjecture fails generically},
   journal={Proc. Amer. Math. Soc.},
   volume={124},
   date={1996},
   number={7},
   pages={2021--2027},
   issn={0002-9939},
}

\bib{C}{article}{
   author={Catlin, David W.},
   title={Estimates of invariant metrics on pseudoconvex domains of
   dimension two},
   journal={Math. Z.},
   volume={200},
   date={1989},
   number={3},
   pages={429--466},
   issn={0025-5874},
}

\bib{Chen06}{article}{
   author={Chen, Bo-Yong},
   title={Weighted Bergman kernel: asymptotic behavior, applications and
   comparison results},
   journal={Studia Math.},
   volume={174},
   date={2006},
   number={2},
   pages={111--130},
   issn={0039-3223},
   review={\MR{2238457}},
   doi={10.4064/sm174-2-1},
}

\bib{Chen}{article}{
   author={Chen, Bo-Yong},
   title={A new invariant K\"{a}hler metric on relatively compact domains in a
   complex manifold},
   journal={Ann. Polon. Math.},
   volume={91},
   date={2007},
   number={2-3},
   pages={147--159},
   issn={0066-2216},
}

\bib{Chen14}{article}{
   author={Chen, Bo-Yong},
   title={Weighted Bergman spaces and the $\overline\partial$-equation},
   journal={Trans. Amer. Math. Soc.},
   volume={366},
   date={2014},
   number={8},
   pages={4127--4150},
   issn={0002-9947},
   review={\MR{3206454}},
   doi={10.1090/S0002-9947-2014-06113-8},
}

\bib{Chen20}{article}{
   author={Chen, Bo-Yong},
   title={Weighted Bergman kernel, directional Lelong number and
   John-Nirenberg exponent},
   journal={J. Geom. Anal.},
   volume={30},
   date={2020},
   number={2},
   pages={1271--1292},
   issn={1050-6926},
   review={\MR{4081313}},
   doi={10.1007/s12220-019-00344-7},
}

\bib{DFH84}{article}{
   author={Diederich, K.},
   author={Forn\ae ss, J. E.},
   author={Herbort, G.},
   title={Boundary behavior of the Bergman metric},
   conference={
      title={Complex analysis of several variables},
      address={Madison, Wis.},
      date={1982},
   },
   book={
      series={Proc. Sympos. Pure Math.},
      volume={41},
      publisher={Amer. Math. Soc., Providence, RI},
   },
   date={1984},
   pages={59--67},
review={\MR{740872}},
doi={10.1090/pspum/041/740872},
}

\bib{Eng97}{article}{
   author={Engli\v{s}, Miroslav},
   title={Asymptotic behaviour of reproducing kernels of weighted Bergman
   spaces},
   journal={Trans. Amer. Math. Soc.},
   volume={349},
   date={1997},
   number={9},
   pages={3717--3735},
   issn={0002-9947},
   review={\MR{1401769}},
   doi={10.1090/S0002-9947-97-01843-6},
}

\bib{Eng02}{article}{
   author={Engli\v{s}, Miroslav},
   title={Weighted Bergman kernels and quantization},
   journal={Comm. Math. Phys.},
   volume={227},
   date={2002},
   number={2},
   pages={211--241},
   issn={0010-3616},
   review={\MR{1903645}},
   doi={10.1007/s002200200634},
}

\bib{Gr}{article}{
   author={Graham, Ian},
   title={Boundary behavior of the Carath\'{e}odory and Kobayashi metrics on
   strongly pseudoconvex domains in $C^{n}$ with smooth boundary},
   journal={Trans. Amer. Math. Soc.},
   volume={207},
   date={1975},
   pages={219--240},
   issn={0002-9947},
   review={\MR{372252}},
   doi={10.2307/1997175},
}

\bib{GK}{article}{
   author={Greene, Robert E.},
   author={Krantz, Steven G.},
   title={Deformation of complex structures, estimates for the $\bar
   \partial $ equation, and stability of the Bergman kernel},
   journal={Adv. in Math.},
   volume={43},
   date={1982},
   number={1},
   pages={1--86},
   issn={0001-8708},
   review={\MR{644667}},
   doi={10.1016/0001-8708(82)90028-7},
}

\bib{Hor}{book}{
   author={H\"ormander, Lars},
   title={An introduction to complex analysis in several variables},
   edition={Second revised edition},
   note={North-Holland Mathematical Library, Vol. 7},
   publisher={North-Holland Publishing Co., Amsterdam-London; American
   Elsevier Publishing Co., Inc., New York},
   date={1973},
   pages={x+213},
review={\MR{0344507}},
}

\bib{JP}{book}{
   author={Jarnicki, Marek},
   author={Pflug, Peter},
   title={Invariant distances and metrics in complex analysis},
   series={De Gruyter Expositions in Mathematics},
   volume={9},
   edition={Second extended edition},
   publisher={Walter de Gruyter GmbH \& Co. KG, Berlin},
   date={2013},
   pages={xviii+861},
   isbn={978-3-11-025043-5},
   isbn={978-3-11-025386-3},
   review={\MR{3114789}},
   doi={10.1515/9783110253863},
}

\bib{KY}{article}{
   author={Kim, Kang-Tae},
   author={Yu, Jiye},
   title={Boundary behavior of the Bergman curvature in strictly
   pseudoconvex polyhedral domains},
   journal={Pacific J. Math.},
   volume={176},
   date={1996},
   number={1},
   pages={141--163},
   issn={0030-8730},
review={\MR{1433986}},
}

\bib{NS}{article}{
   author={Narasimhan, M. S.},
   author={Simha, R. R.},
   title={Manifolds with ample canonical class},
   journal={Invent. Math.},
   volume={5},
   date={1968},
   pages={120--128},
   issn={0020-9910},
}

\bib{PWZ}{article}{
   author={Pasternak-Winiarski, Zbigniew},
   title={On the dependence of the reproducing kernel on the weight of
   integration},
   journal={J. Funct. Anal.},
   volume={94},
   date={1990},
   number={1},
   pages={110--134},
   issn={0022-1236},
   review={\MR{1077547}},
   doi={10.1016/0022-1236(90)90030-O},
}

\bib{Pau-Tak}{article}{
   author={P\u{a}un, Mihai},
   author={Takayama, Shigeharu},
   title={Positivity of twisted relative pluricanonical bundles and their
   direct images},
   journal={J. Algebraic Geom.},
   volume={27},
   date={2018},
   number={2},
   pages={211--272},
   issn={1056-3911},
   review={\MR{3764276}},
   doi={10.1090/jag/702},
}

\bib{Pin}{article}{
   author={Pin\v cuk, S. I.},
   title={Holomorphic inequivalence of certain classes of domains in ${\bf
   C}^{n}$},
   language={Russian},
   journal={Mat. Sb. (N.S.)},
   volume={111(153)},
   date={1980},
   number={1},
   pages={67--94, 159},
   issn={0368-8666},
review={\MR{560464}},
}

\bib{Sel}{article}{
   author={Selberg, Atle},
   title={Automorphic functions and integral operators},
   book={
   title={Collected papers. I},
   series={Springer Collected Works in Mathematics},
   note={With a foreword by K. Chandrasekharan;
   Reprint of the 1989 edition},
   publisher={Springer, Heidelberg},
   date={2014},
   pages={vi+711},
   isbn={978-3-642-41021-5},
   },
}

\bib{Tak}{article}{
   author={Takayama, Shigeharu},
   title={Singularities of Narasimhan-Simha type metrics on direct images of
   relative pluricanonical bundles},
   language={English, with English and French summaries},
   journal={Ann. Inst. Fourier (Grenoble)},
   volume={66},
   date={2016},
   number={2},
   pages={753--783},
   issn={0373-0956},
}

\bib{To-Yeu}{article}{
   author={To, Wing-Keung},
   author={Yeung, Sai-Kee},
   title={Finsler metrics and Kobayashi hyperbolicity of the moduli spaces
   of canonically polarized manifolds},
   journal={Ann. of Math. (2)},
   volume={181},
   date={2015},
   number={2},
   pages={547--586},
   issn={0003-486X},
   review={\MR{3275846}},
   doi={10.4007/annals.2015.181.2.3},
}

\bib{WW}{book}{
   author={Whittaker, E. T.},
   author={Watson, G. N.},
   title={A course of modern analysis},
   series={Cambridge Mathematical Library},
   note={An introduction to the general theory of infinite processes and of
   analytic functions; with an account of the principal transcendental
   functions;
   Reprint of the fourth (1927) edition},
   publisher={Cambridge University Press, Cambridge},
   date={1996},
   pages={vi+608},
   isbn={0-521-58807-3},
review={\MR{1424469}},
doi={10.1017/CBO9780511608759},
}

\bib{Wie}{article}{
   author={Wiegerinck, Jan J. O. O.},
   title={Domains with finite-dimensional Bergman space},
   journal={Math. Z.},
   volume={187},
   date={1984},
   number={4},
   pages={559--562},
   issn={0025-5874},
   review={\MR{760055}},
   doi={10.1007/BF01174190},
}

\end{biblist}
\end{bibdiv}

\end{document}